\newtheorem{theorem}{Theorem}[section]
\newtheorem{proposition}[theorem]{Proposition}
\newtheorem{corollary}[theorem]{Corollary}
\newtheorem{lemma}[theorem]{Lemma}
\theoremstyle{definition}
\newtheorem{definition}[theorem]{Definition}
\newtheorem{example}[theorem]{Example}
\newtheorem{problem}{Problem}
\newtheorem{remark}[theorem]{Remark}
\begin{document}

%\title{Chaos in Tree-Shifts}
\title{Tree-Shifts: Irreducibility, mixing, and the chaos of tree-shifts}

\keywords{Symbolic dynamics; Tree-shift; Chaos; Irreducible; Mixing; Periodic tree; Graph Representation; Labeled Graph}
%\subjclass{Primary 34A33, 37B10; Secondary 11K55, 47A35}

\author{Jung-Chao Ban}
\address[Jung-Chao Ban]{Department of Applied Mathematics, National Dong Hwa University, Hualien 970003, Taiwan, ROC.}
%\email{jcban@mail.ndhu.edu.tw}

\author{Chih-Hung Chang}
\address[Chih-Hung Chang]{Department of Applied Mathematics, National University of Kaohsiung, Kaohsiung 81148, Taiwan, ROC.}
\email{chchang@nuk.edu.tw}

%\date{\today}
\date{July 16, 2015}

\thanks{This work is partially supported by the National Science Council, ROC (Contract No NSC 102-2628-M-259-001-MY3 and 104-2115-M-390-004-).}

\baselineskip=1.2\baselineskip

% -------------------------------------------------------------
\begin{abstract}
Topological behavior, such as chaos, irreducibility, and mixing of a one-sided shift of finite type, is well elucidated. Meanwhile, the investigation of multidimensional shifts, for instance, textile systems is difficult and only a few results have been obtained so far.

This paper studies shifts defined on infinite trees, which are called tree-shifts. Infinite trees have a natural structure of one-sided symbolic dynamical systems equipped with multiple shift maps and constitute an intermediate class in between one-sided shifts and multidimensional shifts. We have shown not only an irreducible tree-shift of finite type, but also a mixing tree-shift that are chaotic in the sense of Devaney. Furthermore, the graph and labeled graph representations of tree-shifts are revealed so that the verification of irreducibility and mixing of a tree-shift is equivalent to determining the irreducibility and mixing of matrices, respectively. This extends the classical results of one-sided symbolic dynamics.

A necessary and sufficient condition for the irreducibility and mixing of tree-shifts of finite type is demonstrated. Most important of all, the examination can be done in finite steps with an upper bound.
\end{abstract}

\maketitle

% -------------------------------------------------------------
\section{Introduction}

Over the past few decades, there have been many researches about chaotic systems. For instance, the strange attractor in the Lorenz system, period doubling in quadratic maps, and Julia sets in complex-valued functions have been studied by scientists in many disciplines. The main reasoning is that the chaotic and random behavior of solutions of deterministic systems is an inherent feature of many nonlinear systems.

Nevertheless, for most systems, the theoretical analysis of the chaotic behavior is difficult. One of the most frequently used techniques is transferring the original system to a conjugate or semiconjugate symbolic dynamical system and then investigating the chaotic behavior in symbolic dynamics (see \cite{Dev-1987} and the references therein).

In the classical symbolic dynamical systems, shifts of finite type are an important class for investigation. A shift of finite type is a set of right-infinite or bi-infinite paths in a finite graph. Moreover, investigating the graph representation of a shift of finite type reveals some important properties such as irreducibility, mixing, and spatial chaos (see \cite{Kit-1998, LM-1995}).

In \cite{AB-TCS2012, AB-TCS2013}, the authors introduce the notion of shifts defined on infinite trees, that are called tree-shifts. Infinite trees have a natural structure of one-sided symbolic dynamical systems equipped with multiple shift maps. The $i$th shift map applies to a tree that gives the subtree rooted at the $i$th children of the tree. Sets of finite patterns of tree-shifts of finite type are strictly testable tree languages. Such testable tree languages are also called $k$-testable tree languages. Probabilistic $k$-testable models are used for pattern classification and stochastic learning. Readers are referred to \cite{VCC-PAMIIEEET2005} for more details. It is also remarkable that M\"{u}ller and Spandl show that there exists an embedding map from a topological dynamical system on metric Cantor space to a cellular automaton defined on Cayley graph, which preserves topological entropy \cite{MS-ETDS2009}.

Tree-shifts are interesting for elucidation since they constitute an intermediate class in between one-sided shifts and multidimensional shifts. The conjugacy of multidimensional shifts of finite type (also known as textile systems or tiling systems) is undecidable (see \cite{CJJ+-ETDS2003, JM-PAMS1999, LS-2002} and references therein). Namely, there is no algorithm for determining whether two tiling systems are conjugate Nevertheless, Williams indicates that the conjugacy of one-sided shifts of finite type is decidable (see \cite{LM-1995}). Aubrun and B\'{e}al extend Williams' result to tree-shifts; more precisely, they show that the conjugacy of irreducible tree-shifts of finite type is decidable \cite{AB-TCS2012}. Furthermore, Aubrun and B\'{e}al accomplish other celebrated results in tree-shifts, such as realizing tree-shifts of finite type and sofic tree-shifts via tree automata, developing an algorithm for determining whether a sofic tree-shift is a tree-shift of finite type, and the existence of irreducible sofic tree-shifts that are not the factors of tree-shifts of finite type. Readers are referred to \cite{AB-TCS2012, AB-TCS2013} for more details.

This paper is the first part of serial works about tree-shifts. In this investigation, we focus on the topological complexity of tree-shifts such as irreducibility, mixing, dense periodic points, and topological transitivity. By elaborating on the topological behavior which a tree-shift is capable of, we intend to reveal the connection between tree-shifts and nonlinear dynamical systems.

Investigating the irreducible components of dynamical systems is essential. It is known that every shift space can be decomposed into several irreducible subshift spaces, and the dynamical behavior of the whole system is determined by the dynamical behavior of these irreducible subsystems. For instance, suppose $X$ is a shift of finite type which consists of several irreducible subshifts of finite type, say, $X_1, X_2, \ldots, X_n$ for some $n$. Then the topological entropy of $X$ is the maximum of entropies of these subshifts. More explicitly, $h(X) = \max_i \{h(X_i)\}$. Readers are referred to \cite{Kit-1998, LM-1995} for more details.

It is known that the full shift is chaotic in the sense of Devaney \cite{Dev-1987}. In other words, the full shift is topologically transitive, sensitive, and contains a set of periodic points which is dense in itself. In this paper, we investigate some topological behavior, such as chaos, irreducibility, and mixing, of tree-shifts. By demonstrating an equivalent condition for the irreducibility of a tree-shift, which is defined in \cite{AB-TCS2012, AB-TCS2013}, we extend Aubrun and B\'{e}al's definition to define the notion of mixing. Moreover, the graph representations and labeled graph representations of tree-shifts of finite type are revealed. It follows that elaborating the properties of tree-shifts of finite type is equivalent to studying their corresponding adjacency matrices and symbolic adjacency matrices, which extends the results of one-sided symbolic dynamics. Most important of all, the verification of irreducibility and mixing can be done in finite steps with an upper bound.

We show that a mixing tree-shift is chaotic. Furthermore, an irreducible tree-shift of finite type is also a chaotic system. Notably, mixing is a sufficient condition for tree-shifts being chaotic, and no further condition is needed. Whenever presumption mixing is replaced by irreducible, the tree-shift has to be of finite type to ensure the chaos. This extends the classical results in symbolic dynamics.

Remarkably, determining whether a multidimensional system is chaotic is difficult. Boyle \emph{et al.} \cite{BPS-TAMS2010} show that, for a two dimensional tiling system, block gluing is a sufficient condition for exhibiting dense periodic points, which is one of the conditions for Devaney chaos. Herein, a higher dimensional shift space is called \emph{block gluing} if there is a constant $\kappa$ such that any two patterns with distance larger than $\kappa$ can be embedded into a bigger pattern. Meanwhile, the sufficient condition for the denseness of periodic points in a multidimensional system remains unknown. Furthermore, Ban \emph{et al.} \cite{BHL+-2015} introduce two sufficient conditions for the primitivity of two-dimensional shifts of finite type. In this work, we demonstrate that mixing is a sufficient condition for a tree-shift (which may or may not be a tree-shift of finite type) to be chaotic in the sense of Devaney.

While the theory of one-dimensional symbolic dynamics is well-established, the results of multidimensional shift spaces are relatively fewer. This work elaborates tree-shifts, which constitute an intermediate category between one-sided shifts and multidimensional shifts, and characterizes the topological properties of tree-shifts systematically. Aside from accomplishing the chaos, irreducibility, and mixing of a tree-shift, we extend the theory of symbolic dynamics as follows:

\begin{enumerate}[\bf 1)]
\item Suppose $X$ is a tree-shift and $X^{[m]}$ is the $m$th higher block tree-shift of $X$ (see Definition \ref{def:higher-block-tree-shift}). Then $X$ and $X^{[m]}$ are topological conjugate for all $m \in \mathbb{N}$.
\item Every tree-shift of finite type is conjugate to a vertex tree-shift (see Definition \ref{def:matrix-shift}), and every vertex tree-shift is a Markov tree-shift.
\item Every tree-shift of finite type $X$ has a labeled graph representation together with a symbolic adjacency matrix $S$. $X$ is irreducible (resp.~mixing) if and only if $S$ is irreducible (resp.~primitive). (See Definitions \ref{def:irreducible-symbolic-matrix} and \ref{def:primitive-symbolic-matrix}.)
\item An $n \times n$ symbolic matrix $S$ is irreducible if and only if, for each $i, j$, there exists a positive integer $k_{i,j} \leq n 2^{n-1}$ such that $S^{k_{i, j}}(i, j)$ contains the formal sum of a complete prefix set. $S$ is primitive if and only if there exists a positive integer $k \leq n^3 2^{2(n-1)}$ such that, for each $i, j$, $S^k(i, j)$ contains the formal sum of a complete prefix set.
\end{enumerate}

The rest of this paper is organized as follows. Section 2 defines some notions, such as irreducibility, mixing, and periodicity, of tree-shifts. Each tree-shift of finite type being conjugate to a Markov tree-shift is also revealed therein. Section 3 investigates the chaos of tree-shifts. The graph and labeled graph representations of tree-shifts of finite are elucidated in Section 4. Beyond that, the necessary and sufficient conditions for irreducibility and mixing are also demonstrated. Section 5 ends this paper with some concluding remarks.

% -------------------------------------------------------------
\section{Definitions}

This section recalls some basic definitions of symbolic dynamics on infinite trees. The nodes of infinite trees considered in this paper have a fixed number of children and are labeled in a finite alphabet. To clarify the discussion, we focus on binary trees, but all results extend to the case
of trees with $d$ children for a fixed positive integer $d$. Hence the class of classical one-sided shift spaces is a special case in the present study.

Let $\Sigma = \{0, 1\}$ and let $\Sigma^*$ be the set of words over $\Sigma$. More specifically, $\Sigma^* = \bigcup_{n \geq 0} \Sigma^n$, where $\Sigma^n = \{w_1 w_2 \cdots w_n: w_i \in \Sigma \text{ for } 1 \leq i \leq n\}$ is the set of words of length $n$ for $n \in \mathbb{N}$ and $\Sigma^0 = \{\epsilon\}$ consists of the empty word $\epsilon$. An \emph{infinite tree} $t$ over a finite alphabet $\mathcal{A}$ is a function from $\Sigma^*$ to $\mathcal{A}$. A node of an infinite tree is a word of $\Sigma^*$. The empty word relates to the root of the tree. Suppose $x$ is a node of a tree. $x$ has children $xi$ with $i \in \Sigma$. A sequence of words $(x_k)_{1 \leq k \leq n}$ is called a \emph{path} if, for all $k \leq n-1$, $x_{k+1} = x_k i_k$ for some $i_k \in \Sigma$. For the rest of this investigation, a tree is referred as an infinite tree unless otherwise stated.

Let $t$ be a tree and let $x$ be a node, we refer $t_x$ to $t(x)$ for simplicity. A subset of words $L \subset \Sigma^*$ is called \emph{prefix-closed} if each prefix of $L$ belongs to $L$. A function $u$ defined on a finite prefix-closed subset $L$ with codomain $\mathcal{A}$ is called a \emph{pattern} (or \emph{block}), and $L$ is called the \emph{support} of the pattern. A subtree of a tree $t$ rooted at a node $x$ is the tree $t'$ satisfying $t'_y = t_{xy}$ for all $y \in \Sigma^*$ such that $xy$ is a node of $t$, where $xy = x_1 \cdots x_m y_1 \cdots y_n$ means the concatenation of $x = x_1 \cdots x_m$ and $y_1 \cdots y_n$.

Suppose $n$ is a nonnegative integer. Let $\Sigma_n = \bigcup_{k = 0}^n \Sigma^k$ denote the set of words of length at most $n$. We say that a pattern $u$ is \emph{a block of height $n$} (or \emph{an $n$-block}) if the support of $u$ is $\Sigma_{n-1}$, denoted by $\mathrm{height}(u) = n$. Furthermore, $u$ is a pattern of a tree $t$ if there exists $x \in \Sigma^*$ such that $u_y = t_{xy}$ for every node $y$ of $u$. In this case, we say that $u$ is a pattern of $t$ rooted at the node $x$. A tree $t$ is said to \emph{avoid} $u$ if $u$ is not a pattern of $t$. If $u$ is a pattern of $t$, then $u$ is called an \emph{allowed pattern} of $t$.

We denote by $\mathcal{T}$ (or $\mathcal{A}^{\Sigma^*}$) the set of all infinite trees on $\mathcal{A}$. For $i \in \Sigma$, the shift transformations $\sigma_i$ from $\mathcal{T}$ to itself are defined as follows. For every tree $t \in \mathcal{T}$, $\sigma_i(t)$ is the tree rooted at the $i$th child of $t$, that is, $\sigma_i(t)_x = t_{ix}$ for all $x \in \Sigma^*$. For the purpose of simplification of the notation, we omit the parentheses and denote $\sigma_i(t)$ by $\sigma_i t$. The set $\mathcal{T}$ equipped with the shift transformations $\sigma_i$ is called the \emph{full tree-shift} of infinite trees over $\mathcal{A}$. Suppose $w = w_1 \cdots w_n \in \Sigma^*$. Define $\sigma_w = \sigma_{w_n} \circ \sigma_{w_{n-1}} \circ \cdots \circ \sigma_{w_1}$. It follows immediately that $(\sigma_w t)_x = t_{wx}$ for all $x \in \Sigma^*$.

Given a collection of patterns $\mathcal{F}$, let $\mathsf{X}_{\mathcal{F}}$ denote the set of all trees avoiding any element of $\mathcal{F}$. A subset $X \subseteq \mathcal{T}$ is called a \emph{tree-shift} if $X = \mathsf{X}_{\mathcal{F}}$ for some $\mathcal{F}$. We say that $\mathcal{F}$ is \emph{a set of forbidden patterns} (or \emph{a forbidden set}) of $X$. It can be seen that a tree-shift satisfies $\sigma_w X \subseteq X$ for all $w \in \Sigma^*$.

Denote the set of all blocks of height $n$ of $X$ by $B_n(X)$, and denote the set of all blocks of $X$ by $B(X)$. Suppose $u \in B_n(X)$ for some $n \geq 2$. Let $\sigma_i u$ be the block of height $n-1$ such that $(\sigma_i u)_x = u_{ix}$ for $x \in \Sigma_{n-2}$. The block $u$ is written as $u = (u_{\epsilon}, \sigma_0 u, \sigma_1 u)$.

A set of patterns $L$ is called \emph{factorial} if $u \in L$ and $v$ is a sub-pattern of $u$ implies $v \in L$. We say that $v$ is a sub-pattern of $u$ if $v$ is a subtree of $u$ rooted at some node $x$ of $u$. The set $L$ is called \emph{extensible} if for any pattern $u \in L$ with support $S(u)$, there exists a pattern $v \in L$ with support $S(v)$ such that $S(u) \subset S(v)$, $v$ coincides with $u$ on $S(u)$, and for any $x \in S(u)$, we have $xi \in S(v)$ for all $i \in \Sigma$.

Suppose $L$ is a factorial and extensible set of patterns. Let $\mathcal{X}(L)$ be the collection of trees whose patterns belong to $L$. Then $\mathcal{X}(L)$ is a tree-shift and $B(\mathcal{X}(L)) = L$. Conversely, if $X$ is a tree-shift, then $X = \mathcal{X}(B(X))$. This result is similar to the one known for the classical shift spaces. Readers are referred to \cite{AB-TCS2013, LM-1995} for more details.

\begin{example}\label{eg:even-sum-each-block}
Figure \ref{fig:even-sum-each-block} illustrates an infinite tree of a tree-shift $\mathsf{X}_{\mathcal{F}}$ on the alphabet $\mathcal{A} = \{0, 1\}$ defined by a finite set $\mathcal{F}$ of forbidden blocks of height 2. The forbidden blocks are those whose label sum is odd; more precisely,
$$
\mathcal{F} = \{(u_{\epsilon}, u_0, u_1): u_{\epsilon} + u_0 + u_1 = 1 \pmod{2}\}.
$$

\begin{figure}
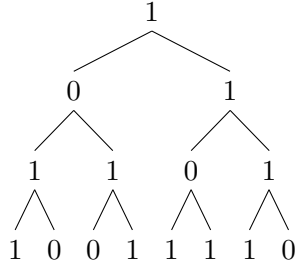

\begin{center}
\Tree [.1 [.0 [.1 1 0 ] [.1 0 1 ] ]
         [.1 [.0 1 1 ] [.1 1 0 ] ] ]
\end{center}
\caption{A part of an infinite tree of the tree-shift $\mathsf{X}_{\mathcal{F}}$, where $\mathcal{F}$ is the set of blocks of height 2 whose label sum is odd.}\label{fig:even-sum-each-block}
\end{figure}
\end{example}

Suppose $x$ is a node, that is, $x \in \Sigma^*$. Denote by $|x|$ the length of $x$. For any two trees $t$ and $t'$, define
\begin{equation}
\mathrm{d}(t, t') = \left\{
             \begin{array}{ll}
               2^{-n}, & n = \min\{|x|: t_x \neq t'_x\} < \infty \hbox{;} \\
               0, & \hbox{otherwise.}
             \end{array}
           \right.
\end{equation}
Then $\mathrm{d}$ is a metric on $\mathcal{T}$ and is similar as the metric defined on symbolic dynamics (cf.~\cite{LM-1995}).

Let $\mathcal{T}$ and $\mathcal{T}'$ be the full tree-shifts over finite alphabets $\mathcal{A}$ and $\mathcal{A}'$, respectively, and let $X$ be a tree-subshift of $\mathcal{T}$. (That is, $X$ is itself a tree-shift and $X \subseteq \mathcal{T}$.) A function $\phi: X \to \mathcal{T}'$ is called a \emph{sliding block code} if there exists a positive integer $m$ and a map $\Phi: B_m(X) \to \mathcal{A}'$ such that $\phi(t)_x = \Phi(u)$, the image of $m$-block of $t$ rooted at $x$ with respect to $\Phi$, for all $x \in \Sigma^*$. The local map $\Phi$ herein is called an \emph{$m$-block map}, and a \emph{block map} is a map which is an $m$-block map for some positive integer $m$.

In the theory of symbolic dynamics, the Curtis-Lyndon-Hedlund theorem (see \cite{Hed-MST1969}) indicates that, for two shift spaces $X$ and $Y$, a map $\phi: X \to Y$ is a sliding block code if and only if $\phi$ is continuous and $\phi \circ \sigma_x = \sigma_Y \circ \phi$. A similar discussion extends to tree-shifts; in other words, $\phi$ is a sliding block code (between tree-shifts) if and only if $\phi$ is continuous and commutes with all tree-shift maps $\sigma_i$ for $i \in \Sigma$.

If a sliding block code $\phi: X \to Y$, herein $X$ and $Y$ are tree-shifts, is onto, then $\phi$ is called a \emph{factor code} from $X$ to $Y$. A tree-shift $Y$ is a \emph{factor} of $X$ if there is a factor code from $X$ onto $Y$. If $\phi$ is one-to-one, then $\phi$ is called an \emph{embedding} of $X$ into $Y$.

A sliding block code $\psi: Y \to X$ is called an \emph{inverse} of $\phi$ if $\psi(\phi(x)) = x$ for all $x \in X$ and $\phi(\psi(y)) = y$ for all $y \in Y$. In this case, we say that $\phi$ is \emph{invertible} and write $\psi = \phi^{-1}$.

\begin{definition}
A sliding block code $\phi: X \to Y$ is a \emph{conjugacy from $X$ to $Y$} if it is invertible. Two tree-shifts $X$ and $Y$ are called \emph{conjugate}, denoted by $X \cong Y$, if there is a conjugacy from $X$ to $Y$.
\end{definition}

Let $X$ be a tree-shift and let $m$ be a positive integer. We define the \emph{$m$th higher block tree-shift} (or \emph{$m$th higher block presentation}) of $X$, denote by $X^{[m]}$, as follows. Let $\mathcal{A}' = B_m(X)$ be the collection of all allowed $m$-blocks in $X$. Define the $m$th higher block code $\phi_m: X \to (\mathcal{A}')^{\Sigma^*}$ by
\begin{equation}
\phi_m (t)_x = u,
\end{equation}
where $u$ is the $m$-block in $t$ rooted at $x$.

\begin{definition}\label{def:higher-block-tree-shift}
For $m \in \mathbb{N}$, the \emph{$m$th higher block tree-shift} $X^{[m]}$ of a tree-shift $X$ is the image $X^{[m]} = \phi_m (X)$ in the full tree-shift $(\mathcal{A}')^{\Sigma^*}$.
\end{definition}

Notably, $\phi_m$ comes from an $m$-block map and hence is a sliding block code. Let $\psi: X^{[m]} \to X$ be the sliding block code obtained from the one-block map $\Psi: \mathcal{A}' \to \mathcal{A}$ defined as
$$
\Psi(u) = u_{\epsilon},
$$
where $u \in B_m(X)$ is an $m$-block. It is then seen that $\phi_m$ is invertible with inverse $\psi$. This demonstrates the following theorem, which is an extension of a classical result in symbolic dynamics.

\begin{theorem}\label{thm:cong-higher-block}
Tree-shifts $X$ and $X^{[m]}$ are conjugate for all $m \in \mathbb{N}$.
\end{theorem}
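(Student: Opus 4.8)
The plan is to verify directly that the two sliding block codes already written down are mutually inverse, so that $\phi_m$ is a conjugacy by definition. Recall that $\phi_m$ is induced by the $m$-block map sending the $m$-block of $t$ rooted at $x$ to itself, viewed as a letter of $\mathcal{A}' = B_m(X)$, and that $\psi$ is induced by the one-block map $\Psi(u) = u_\epsilon$. That both are sliding block codes is immediate from their definitions: $\phi_m(t)_x$ depends only on the $m$-block of $t$ at $x$ (window $m$), and $\psi(s)_x = \Psi(s_x)$ depends only on the single letter $s_x$ (window $1$). Moreover $\phi_m(X) = X^{[m]}$ by definition, so $\phi_m$ is onto $X^{[m]}$, and $X^{[m]}$ is a tree-shift by the standard argument (the image of a tree-shift under a sliding block code is factorial and extensible).

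The first identity, $\psi \circ \phi_m = \mathrm{id}_X$, is a one-line check: for $t \in X$ and $x \in \Sigma^*$, writing $u$ for the $m$-block of $t$ rooted at $x$, we have $\psi(\phi_m(t))_x = \Psi(\phi_m(t)_x) = \Psi(u) = u_\epsilon = t_x$. In particular $\psi(X^{[m]}) \supseteq X$; combined with $\phi_m$ being onto $X^{[m]}$ and $\psi$ being induced by a block map (hence sending allowed blocks to allowed blocks, so $\psi(X^{[m]}) \subseteq X$), this gives $\psi(X^{[m]}) = X$, so that $\psi$ is genuinely a map $X^{[m]} \to X$.

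For the second identity, $\phi_m \circ \psi = \mathrm{id}_{X^{[m]}}$, I would argue via surjectivity of $\phi_m$ rather than a naive pointwise computation on an arbitrary $\mathcal{A}'$-labeled tree: every $s \in X^{[m]}$ equals $\phi_m(t)$ for some $t \in X$, whence $\phi_m(\psi(s)) = \phi_m(\psi(\phi_m(t))) = \phi_m\bigl((\psi \circ \phi_m)(t)\bigr) = \phi_m(t) = s$ by the first identity. This is the step that deserves care: a tree labeled over $\mathcal{A}' = B_m(X)$ need not have consistent overlaps between the $m$-blocks attached to adjacent nodes, and it is precisely membership in $X^{[m]} = \phi_m(X)$ that supplies this consistency, so the real content of the theorem is packaged into the definition of $X^{[m]}$ as the image of $\phi_m$. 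With both identities established, $\phi_m$ is an invertible sliding block code with $\phi_m^{-1} = \psi$, hence a conjugacy from $X$ onto $X^{[m]}$, and therefore $X \cong X^{[m]}$ for all $m \in \mathbb{N}$.
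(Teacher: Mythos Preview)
Your proof is correct and follows the same approach as the paper: both define $\phi_m$ via the $m$-block map and $\psi$ via the one-block map $\Psi(u)=u_\epsilon$, then check that these are mutually inverse sliding block codes. The paper's argument is extremely terse (essentially the sentence ``It is then seen that $\phi_m$ is invertible with inverse $\psi$'' just before the statement), whereas you spell out both composites explicitly and, in particular, handle $\phi_m\circ\psi=\mathrm{id}_{X^{[m]}}$ cleanly via the surjectivity of $\phi_m$ rather than by a direct overlap-consistency computation.
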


\begin{example}\label{eg:even-sum-each-block-part1-1}
Let $X$ be the tree-shift discussed in Example \ref{eg:even-sum-each-block}. Relabeling $B_2(X)$ as $\mathcal{A}' = \{0, 1, 2, 3\}$, where $0, 1, 2, 3$ is
$$
\Tree [.0 0 0 ] \quad \Tree [.0 1 1 ] \quad \Tree [.1 0 1 ] \quad \Tree [.1 1 0 ]
$$
respectively. Theorem \ref{thm:cong-higher-block} infers that $X$ is conjugate to the tree-shift $X' = \phi_2(X)$ over $\mathcal{A}'$. Suppose $t \in X$ is an infinite tree with the pattern in Figure \ref{fig:even-sum-each-block} rooting at $\epsilon$. Then the subtree $\phi_2(t)|_{\Sigma_3}$ of $\phi_2(t)$ is
\begin{center}
\Tree [.2 [.1 3 2 ]
         [.2 1 3 ] ]
\end{center}
\end{example}

A tree-shift $X = \mathsf{X}_{\mathcal{F}}$ is called a \emph{tree-shift of finite type} (TSFT) if the forbidden set $\mathcal{F}$ is finite. Suppose $X$ is a TSFT with forbidden set $\mathcal{F}$. It follows immediately from Theorem \ref{thm:cong-higher-block} that, without loss of generality, we may assume that $u$ is a $2$-block for each $u \in \mathcal{F}$. A TSFT whose forbidden set consists of $2$-blocks is called a \emph{Markov tree-shift}. Readers are referred to \cite{AB-TCS2012} for more details.

% -------------------------------------------------------------
\section{Chaotic Tree-shifts}

This section defines the notion of chaos which is suitable for tree-shifts and investigates whether a given tree-shift is chaotic. There are several definitions for chaos, such as Devaney chaos (see \cite{Dev-1987}) and Li-Yorke chaos (see \cite{LY-AMM1975}) for instance, in dynamical systems. In this elucidation, we elucidate the chaotic tree-shifts in the sense of Devaney. We recall Devaney's definition of chaos first.

Let $V$ be a metric space with metric $\mathbf{d}$. $T: V \to V$ is said to be \emph{topologically transitive} if for any pairs of open sets $U_1, U_2 \subset V$ there exists $k > 0$ such that $T^k U_1 \cap U_2 \neq \varnothing$.

Let $x \in V$. We call $x$ a \emph{periodic point} for $T$ if there exists $k > 0$ such that $T^k x = x$; in this case, $\{x, T x, \ldots, T^{k-1} x\}$ is called a \emph{periodic orbit}.

If there exists $\delta > 0$ such that, for any $x \in V$ and any neighborhood $U$ of $x$, there exists $y \in U$ and $n \in \mathbb{N}$ such that $\mathbf{d}(T^n x, T^n y) > \delta$, then we say that $T$ has \emph{sensitive dependence on initial conditions}.

These three notions are the basic ingredients of a chaotic system.

\begin{definition}
A dynamical system $T$ is \emph{chaotic} if:
\begin{enumerate}[\bf (a)]
\item Periodic points for $T$ are dense in $V$.
\item $T$ is topologically transitive.
\item $T$ depends sensitively on initial conditions.
\end{enumerate}
\end{definition}

% -------------------------------------------------------------
\subsection{Irreducible Tree-Shifts}

This subsection defines the notions of irreducibility and mixing for tree-shifts. The definitions addressed below are strong. However, these definitions are natural in the way that they extend the theory of shift spaces to tree-shifts.

Let $P \subset \Sigma^*$ be a subset of words. $P$ is called a \emph{prefix set} if no word in $P$ is a prefix of another one. The length of $P$, denoted by $|P|$, is the longest word in $P$. More specifically,
$$
|P| = \left\{
             \begin{array}{ll}
               \max\{|x|: x \in P\}, & \hbox{$P$ is a finite set;} \\
               \infty, & \hbox{otherwise.}
             \end{array}
           \right.
$$
A finite prefix set $P$ is called a \emph{complete prefix set} (CPS) if any $x \in \Sigma^*$, such that $|x| \geq |P|$, has a prefix in $P$.

\begin{definition}\label{def:irreducible}
A tree-shift $X$ is \emph{irreducible} if for each pair of blocks $u, v$ with $u, v \in B_n(X)$, there is a tree $t \in X$ and a complete prefix set $P \subset \bigcup_{k \geq n} \Sigma^k$ such that $u$ is a subtree of $t$ rooted at $\epsilon$ and $v$ is a subtree of $t$ rooted at $x$ for all $x \in P$.
\end{definition}

The definition of irreducible tree-shifts is given in \cite{AB-TCS2013}. It is demonstrated therein that, for any two conjugate tree-shifts $X$ and $Y$, $X$ is irreducible if and only if $Y$ is irreducible.

Suppose $W \subset \Sigma^*$ is a subset of finite words and $x \in \Sigma^*$. We define the concatenation of  $x$ and $W$, denoted by $xW$, as
$$
xW = \{xw: w \in W\}.
$$
Let $t$ be a tree in the tree-shift $X$ and let $u$ be a subtree of $t$ rooted at $x$ for some $x \in \Sigma^*$. For the clarification of discussion, the notation $t|_{xS(u)}$ means the block $u$.

\begin{theorem}\label{thm:equiv-def-irr}
Suppose $X$ is a tree-shift. The following are equivalent.
\begin{enumerate}[\bf (i)]
\item $X$ is irreducible.
\item For each pair of blocks $u \in B_n(X), v \in B_m(X)$, where $n, m \in \mathbb{N}$, there exists $\{P_w\}_{w \in \Sigma^{n-1}}$ with $P_w$ being a CPS for any $w \in \Sigma^{n-1}$ and $t \in X$ such that
$$
t|_{S(u)} = u \quad \text{and} \quad t|_{w x S(v)} = v \text{ for all } w \in \Sigma^{n-1}, x \in P_w.
$$
\item For each pair of blocks $u \in B_n(X), v \in B_m(X)$, where $n, m \in \mathbb{N}$, there exists $\{P_k\}_{1 \leq k \leq l}$ for some $l$ with $P_k$ being a CPS for $1 \leq k \leq l$ and $t \in X$ such that $t|_{S(u)} = u$ and, for each $w \in \Sigma^{n-1}$,
$$
t|_{w x S(v)} = v \text{ for all } x \in P_k \text{ for some } k.
$$
\end{enumerate}
\end{theorem}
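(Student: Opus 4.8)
The plan is to prove the cycle of implications (i) $\Rightarrow$ (ii) $\Rightarrow$ (iii) $\Rightarrow$ (i). The content is essentially a bookkeeping argument about complete prefix sets, so the main work is organizing the quantifiers correctly; the only genuinely nontrivial step is (iii) $\Rightarrow$ (i).

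First, for (i) $\Rightarrow$ (ii): given $u \in B_n(X)$ and $v \in B_m(X)$, I would first reduce to the case $n = m$ by replacing the shorter block with any of its extensions of the appropriate height (possible since $B(X)$ is extensible), then reduce $v$ to height $n$ as well, or more simply apply Definition \ref{def:irreducible} directly with max$(n,m)$ in place of $n$. Irreducibility gives a tree $t$ with $u$ rooted at $\epsilon$ and a single CPS $P \subset \bigcup_{k \geq n}\Sigma^k$ along which $v$ appears. The task is to manufacture, for each $w \in \Sigma^{n-1}$, a CPS $P_w$ with $v$ appearing at $wx$ for all $x \in P_w$. The idea is that every $y \in P$ with $|y| \geq n$ has a unique prefix $w(y) \in \Sigma^{n-1}$; write $y = w(y) x_y$. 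Then for each fixed $w$, set $P_w = \{x_y : y \in P,\ w(y) = w\}$. One checks $P_w$ is a prefix set, and completeness of $P$ forces each $P_w$ to be a CPS (given $x$ with $|x|$ large, $wx$ has a prefix in $P$, which must be of the form $w x'$ with $x' \in P_w$ a prefix of $x$). The tree $t$ itself already witnesses $t|_{S(u)} = u$ and $t|_{wxS(v)} = v$ for $x \in P_w$.

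The implication (ii) $\Rightarrow$ (iii) is immediate: take $l = |\Sigma^{n-1}| = 2^{n-1}$, enumerate $\Sigma^{n-1} = \{w_1, \ldots, w_l\}$, and set $P_k = P_{w_k}$; the same tree $t$ works, and for each $w = w_k$ the relation $t|_{wxS(v)} = v$ holds for all $x \in P_k$, which is the required "for some $k$" (namely $k$ with $w_k = w$).

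The interesting step is (iii) $\Rightarrow$ (i). Here the obstacle is that the hypothesis only promises, for each $w \in \Sigma^{n-1}$, that \emph{some single} $P_k$ from the finite list works at $w$ — different $w$'s may use different $P_k$'s — whereas Definition \ref{def:irreducible} demands one global CPS $P$ with $v$ appearing at every $x \in P$. The strategy is to amalgamate: for each $w \in \Sigma^{n-1}$ pick an index $k(w) \in \{1, \ldots, l\}$ that works, and define $P = \bigcup_{w \in \Sigma^{n-1}} w\, P_{k(w)}$. Since distinct words of $\Sigma^{n-1}$ are incomparable under the prefix order, and each $P_{k(w)}$ is itself a prefix set, $P$ is a prefix set; and since each $P_{k(w)}$ is complete, $P$ is a CPS with $|P| = (n-1) + \max_w |P_{k(w)}| \geq n$, so $P \subset \bigcup_{k \geq n}\Sigma^k$. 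Finally the tree $t$ supplied by (iii) satisfies $t|_{S(u)} = u$ (so $u$ is rooted at $\epsilon$) and, for every $w$ and every $x \in P_{k(w)}$, $t|_{wxS(v)} = v$ — i.e. $v$ is a subtree of $t$ rooted at $wx$ for every $wx \in P$. This is exactly the conclusion of Definition \ref{def:irreducible}, completing the cycle. The only care needed throughout is to handle the reduction to equal heights $n = m$ consistently (extending whichever of $u$, $v$ is shorter), so that the supports and the CPS length bounds line up.
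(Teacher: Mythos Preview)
Your proposal is correct and follows essentially the same approach as the paper: the key constructions---decomposing a global CPS $P$ into the fibres $P_w=\{x:wx\in P\}$ for (i)$\Rightarrow$(ii), and amalgamating $P=\bigcup_{w\in\Sigma^{n-1}} wP_{k(w)}$ for the converse direction---are exactly what the paper does. The only cosmetic difference is that the paper proves (i)$\Leftrightarrow$(ii) directly and dismisses (ii)$\Leftrightarrow$(iii) as straightforward, whereas you close the cycle through (iii); your treatment of (iii)$\Rightarrow$(i) is in fact more explicit than the paper's terse handling of (ii)$\Rightarrow$(i).
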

\begin{proof}
It suffices to show that (i) is equivalent to (ii) since (ii) being a necessary and sufficient condition for (iii) can be verified in a straightforward manner.

Suppose $X$ is irreducible. For each pair of blocks $u \in B_n(X), v \in B_m(X)$ with $n, m$ being positive integers, a straightforward examination indicates that there is a tree $t \in X$ and a complete prefix set $P \subset \bigcup_{k \geq n} \Sigma^k$ such that
$$
t|_{S(u)} = u \quad \text{and} \quad t|_{xS(v)} = v \text{ for all } x \in P
$$
whether $n = m$ or not. For each $w \in \Sigma^{n-1}$, set
$$
P_w=\{w':ww'\in P\}.
$$
Note that $P_w \neq \varnothing$ for any $w \in \Sigma^{n-1}$ since $P$ is a CPS. Furthermore, $\bigcup_{w \in \Sigma^{n-1}} P_w = P$. It remains to show that $P_w$ is a CPS for each $w$.

Suppose $P_w$ is not a CPS for some $w \in \Sigma^{n-1}$. Without loss of generality, we may assume that $|P_w| + n > |P|$. Then there exists $\alpha = \alpha_1 \ldots \alpha_l$ such that $\alpha_1 \ldots \alpha_k \notin P_w$ for $1\leq k \leq l$ and $l > |P_w|$. This implies that $w \alpha$ does not have a prefix in $P$, which contradicts to the presumption that $P$ is a CPS.

Conversely, suppose $\{P_w\}_{w \in \Sigma^{n-1}}$ is a collection of CPS for a given pair of $n$-blocks $u$ and $v$. Let $P = \bigcup_{w \in \Sigma^{n-1}} P_w$. It suffices to show that $P$ is a CPS. The demonstration can be performed via the analogous argument given above. Hence $X$ is an irreducible tree-shift.

This completes the proof.
\end{proof}

We remark that Theorem \ref{thm:equiv-def-irr} not only clarifies the intuition of the notion of irreducible tree-shifts but indicates the height of blocks $u$ and $v$ in Definition \ref{def:irreducible} need not be the same. Moreover, Theorem \ref{thm:equiv-def-irr} can be used to define the notion of mixing tree-shifts.

\begin{definition}\label{def:mixing}
A tree-shift $X$ is \emph{mixing} if there exist two CPS $P_0$ and $P_1$ such that, for each pair of blocks $u, v \in B(X)$, there is a tree $t \in X$ satisfying
$$
t|_{S(u)}=u \quad \text{and} \quad t|_{wxS(v)} = v\ \text{ for all } x \in P_{w_{n-1}},
$$
where $n = |u|$ and $w = w_1 \ldots w_{n-1} \in \Sigma^{n-1}$.
\end{definition}

\begin{remark}
It follows immediately that a mixing tree-shift is itself irreducible.
\end{remark}

% -------------------------------------------------------------
\subsection{Chaotic Tree-Shifts in the Sense of Devaney}

This subsection defines the notion of periodic points of tree-shifts. Note that, instead of only one shift map for a shift space, there are two shift maps, say $\sigma_0$ and $\sigma_1$, for tree-shifts. After defining periodic trees, we then reveal whether a tree-shift is chaotic in the sense of Devaney.

\begin{definition}\label{def:periodic-tree}
Let $X$ be a tree-shift. An infinite tree $t \in X$ is \emph{periodic} if there is a CPS $P$ such that $\sigma_x t=t$ for all $x \in P$, where $\sigma_x =\sigma_{x_k} \circ \sigma_{x_{k-1}} \circ \ldots \circ \sigma_{x_1}$ for $x=x_1 \ldots x_k$.
\end{definition}

\begin{theorem}\label{thm:irr-TSFT-Dense-Periodic-Points}
Suppose a tree-shift of finite type $X$ is irreducible. Then the periodic points of $X$ are dense in $X$.
\end{theorem}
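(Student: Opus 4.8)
The plan is to prove the stronger statement that every allowed block of $X$ is the block rooted at $\epsilon$ of some periodic tree of $X$; density of the periodic points in the metric $\mathrm{d}$ follows at once, since given $s\in X$ and $\varepsilon>0$ one picks $n$ with $2^{-n}<\varepsilon$, sets $u:=s|_{\Sigma_{n-1}}\in B_n(X)$, and then a periodic $t'\in X$ with $t'|_{\Sigma_{n-1}}=u$ satisfies $\mathrm{d}(s,t')\le 2^{-n}<\varepsilon$. First I would reduce to the case that $X$ is a Markov tree-shift: a conjugacy of tree-shifts is a homeomorphism commuting with every $\sigma_i$, hence with every $\sigma_x$, so it carries periodic trees to periodic trees (with the same witnessing CPS) and dense subsets to dense subsets; thus ``the periodic points are dense'' is invariant under conjugacy, and by Theorem~\ref{thm:cong-higher-block} we may pass to a higher block presentation in which every forbidden block has height $2$ (irreducibility being preserved, as noted after Definition~\ref{def:irreducible}).

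Now fix $u\in B_n(X)$ with $n$ arbitrary. Applying irreducibility to the pair $(u,u)$ yields a tree $t\in X$ and a complete prefix set $P\subset\bigcup_{k\ge n}\Sigma^k$ with $t|_{\Sigma_{n-1}}=u$ and $t|_{x\Sigma_{n-1}}=u$ for all $x\in P$. Let $D$ be the finite prefix-closed set of all prefixes of the words $xw$ with $x\in P$, $w\in\Sigma^{n-1}$, and set $\tau=t|_D$. I would then define $t'$ to be the tree determined by the two requirements $t'|_D=\tau$ and $\sigma_x t'=t'$ for every $x\in P$. These determine $t'$ at every node: by completeness of $P$ each node factors uniquely as $y=x_1\cdots x_k z$ with $x_1,\dots,x_k\in P$ and $z$ having no prefix in $P$, whence $z$ is a strict prefix of an element of $P$ and so $z\in D$, giving $t'_y=t'_z=\tau_z$. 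They are consistent, for the two requirements overlap only on $\{xw:x\in P,\ w\in\Sigma_{n-1}\}$, where both return $u_w$ because $t|_{\Sigma_{n-1}}=t|_{x\Sigma_{n-1}}=u$. By construction $\sigma_x t'=t'$ for every $x$ in the CPS $P$, so $t'$ is periodic in the sense of Definition~\ref{def:periodic-tree}, and $t'|_{\Sigma_{n-1}}=u$, as required.

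It remains to verify $t'\in X$, and this is the step I expect to be the main obstacle. Since $X$ is Markov, it suffices to show that every height-$2$ block of $t'$ occurs in $t$. Writing a node as $y=x_1\cdots x_k z$ as above, the height-$2$ block of $t'$ rooted at $y$ equals the one rooted at $z$, whose support $\{z,z0,z1\}$ lies entirely in $D$ (each of these three nodes is an element of $P$ or a strict prefix of one), so it coincides with the height-$2$ block of $\tau$, hence of $t$, which is allowed because $t\in X$. This is exactly where the choice $v=u$ in the application of irreducibility is essential: the subtree substituted by the recursion at a node $x\in P$ is all of $t'$, whose root block is that of $u$, matching the copy of $u$ already sitting at $x$ in the witness $t$; substituting an unrelated block there could render the recursion inconsistent and create forbidden $2$-blocks. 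For a general TSFT with forbidden blocks of height $N$ this localization breaks down --- a height-$N$ block straddling a node of $P$ can have support reaching below several elements of $P$ and out of $D$ --- which is precisely why the reduction to Markov tree-shifts is carried out first.
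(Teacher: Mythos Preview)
Your argument is correct, and it takes a somewhat different route from the paper's proof. The paper does not pass to a Markov presentation via conjugacy; instead it works with the given TSFT, assumes without loss of generality that the block height $n$ exceeds the height of all forbidden patterns, and then builds the periodic tree as a limit $t'=\lim_k t^k$ of an increasing sequence of trees obtained by repeatedly invoking irreducibility (in the form of Theorem~\ref{thm:equiv-def-irr}(iii), with two CPS $P_0,P_1$ indexed by the last letter of $w\in\Sigma^{n-1}$) and gluing. Your approach, by contrast, applies Definition~\ref{def:irreducible} once to the pair $(u,u)$ to obtain a single witness $t$ and CPS $P$, and then writes down the periodic tree $t'$ explicitly by the factorisation $y=x_1\cdots x_k z$; the Markov reduction is what lets you verify $t'\in X$ by localising each height-$2$ block inside the finite template $\tau=t|_D$.

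What each buys: the paper's limit construction avoids the conjugacy step and stays within the original alphabet, but at the cost of a slightly informal iterative description. Your construction is more explicit and makes the verification that $t'\in X$ completely local; the price is the preliminary reduction and the bookkeeping around $D$ and the unique factorisation through $P$. Both hinge on the same key observation, which you isolate clearly: taking $v=u$ in the irreducibility witness makes the glued copies overlap consistently on their top blocks, so no new forbidden pattern can appear at the seams.
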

\begin{proof}
To demonstrate that the periodic points are dense in $X$, it suffices to show that for any $t \in  X$ and  $n \in \mathbb{N}$, there is a tree $t' \in X$ which is periodic and $t'|_{\Sigma_{n-1}}=t|_{\Sigma_{n-1}}$, where $\Sigma_m = \bigcup_{k=0}^m \Sigma^k$ is the collection of words whose length are at most $m$.

Suppose $t \in X$ and $n \in \mathbb{N}$ are given. We construct a periodic tree $t'$ as follows.

Let $u = t|_{\Sigma_{n-1}}$ be the $n$-block of $t$ rooted at $\epsilon$. Since $X$ is irreducible, without loss of generality, we may assume that there exist two CPS $P_0$, $P_1$ and a tree $t^1 \in X$ such that
$$
t^1|_{S(u)} = u \quad \text{and} \quad t^1|_{ww'S(u)} = u \text{ for all } w \in \Sigma_{n-1}, w' \in P_{w_{n-1}}.
$$
Set
\begin{align*}
L = \Sigma_{n-1} &\bigcup (\bigcup_{w \in \Sigma^{n-2}} w0P_0S(u)) \bigcup (\bigcup_{w \in \Sigma^{n-2}} w1P_1S(u)) \\
  &\bigcup (\bigcup_{w \in \Sigma^{n-2}} w0P_0^c) \bigcup (\bigcup_{w \in \Sigma^{n-2}} w1P_1^c)
\end{align*}
and $v = t^1|_L$, where $P_i^c = \Sigma_{|P_i|} \setminus P_i$ for $i = 0, 1$. Notably, $v$ is a subtree of $t^1$ but not necessary a block. Without loss of generality, we may assume that $n \geq |\mathcal{F}|$, where $\mathcal{F}$ is the forbidden set of $X$. It comes from $X$ being a TSFT that there exists $t^2 \in X$ such that
$$
t^2|_{S(v)} = v \quad \text{and} \quad (\sigma_{ww'} t^2)|_{S(v)} = v \text{ for all } w \in \Sigma_{n-1}, w' \in P_{w_{n-1}}.
$$
More precisely, it is seen that $\sigma_{ww'} v = u$ for all $w \in \Sigma_{n-1}, w' \in P_{w_{n-1}}$. $X$ being a TSFT with $|\mathcal{F}| \leq n$ infers that the concatenation of $\sigma_{ww'} v$ and $v$ which overlaps the top $n$-block is allowed in $B(X)$.

Analogous to the construction of $t^2$, we construct infinite trees $t^3, t^4, \ldots$ in $X$ which satisfies
$$
t^k|_{S(v)} = v \quad \text{and} \quad (\sigma_{(ww')^i} t^k)|_{S(v)} = v
$$
for all $w \in \Sigma_{n-1}, w' \in P_{w_{n-1}}, k \geq 2$, and $i \leq k-1$. Let $t' = \lim_{k \to \infty} t^k$ and $P = \bigcup_{w \in \Sigma^{n-1}} w P_{w_{n-1}}$. It can be verified that $P$ is a CPS and $\sigma_x t' = t'$ for all $x \in P$. In other words, $t'$ is a periodic tree in $X$.

This completes the proof.
\end{proof}

Theorem \ref{thm:mixing-Dense-Periodic-Points} shows that, beyond Theorem \ref{thm:irr-TSFT-Dense-Periodic-Points}, any mixing tree-shift contains dense periodic trees.

\begin{theorem}\label{thm:mixing-Dense-Periodic-Points}
Suppose $X$ is a mixing tree-shift. Then the periodic points of $X$ are dense in $X$.
\end{theorem}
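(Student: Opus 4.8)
The plan is to mimic the proof of Theorem \ref{thm:irr-TSFT-Dense-Periodic-Points} but to exploit the mixing hypothesis in order to dispense with the finite-type assumption. Recall that the role played by $X$ being a TSFT in the previous proof was twofold: it guaranteed that once a large enough block $u$ (of height $n \geq |\mathcal{F}|$) was fixed, the overlapping concatenation of copies of $u$ along a complete prefix set remained allowed, so that the limit tree $t'$ actually lay in $X$. When $X$ is merely mixing but not of finite type, this overlapping argument fails, and we must instead build the periodic tree $t'$ so that each successive self-similar copy is \emph{glued} to the previous one through a genuine connecting block supplied by the mixing property, rather than by overlapping.

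First I would fix $t \in X$ and $n \in \mathbb{N}$, and set $u = t|_{\Sigma_{n-1}} \in B_n(X)$; it suffices to produce a periodic $t' \in X$ with $t'|_{\Sigma_{n-1}} = u$. Let $P_0, P_1$ be the two CPS furnished by Definition \ref{def:mixing}, and put $P = \bigcup_{w \in \Sigma^{n-1}} w P_{w_{n-1}}$, which is a CPS (the argument is exactly the one in Theorem \ref{thm:equiv-def-irr} identifying $\bigcup_w P_w$ with a CPS). Applying mixing to the pair $(u,u)$ gives a tree $t^1 \in X$ with $t^1|_{S(u)} = u$ and $t^1|_{wx S(u)} = u$ for all $w \in \Sigma^{n-1}$, $x \in P_{w_{n-1}}$. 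The key structural point is that $t^1$ restricted to the finite support $L = \Sigma_{n-1} \cup \bigcup_{w \in \Sigma^{n-1}} w P_{w_{n-1}} \Sigma_{n-1}$ is a \emph{fixed} finite pattern $v$: it carries a copy of $u$ at the root and a copy of $u$ at the root of each subtree hanging off $P$, connected by the material of $t^1$ in between. Because mixing uses the \emph{same} pair $P_0,P_1$ regardless of the blocks, we may re-apply it at each of the terminal copies of $u$ with exactly the same connecting pattern $v$, and iterate.

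Next I would iterate: having a tree $t^k \in X$ that agrees with the prescribed self-similar pattern on all nodes of $\bigcup_{j=0}^{k-1}(\text{the }j\text{th iterate of }P) \cdot \Sigma_{n-1}$, apply mixing at each of the $2^{k}$-many (roughly) copies of $u$ sitting at depth-$k$ occurrences of $P$ to extend by one more layer, obtaining $t^{k+1} \in X$; the connecting blocks are all copies of the same $v$, and each lies in $B(X)$ because it is a sub-block of $t^1 \in X$. Since the supports of the prescribed patterns exhaust arbitrarily deep initial segments of $\Sigma^*$, the sequence $(t^k)$ is Cauchy in the metric $\mathrm{d}$; let $t' = \lim_k t^k$. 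Each $t^k$ lies in $X$, and $X$ is closed, so $t' \in X$. By construction $t'|_{\Sigma_{n-1}} = u$ and $\sigma_x t' = t'$ for every $x \in P$, because the subtree of $t'$ rooted at any $x \in P$ is, by the self-similar construction, an exact copy of $t'$ itself. Hence $t'$ is periodic (Definition \ref{def:periodic-tree}), and since $t$ and $n$ were arbitrary, periodic points are dense in $X$.

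The main obstacle I anticipate is bookkeeping the self-similarity cleanly: one must verify that the iterated construction is consistent, i.e.\ that applying mixing at a depth-$k$ copy of $u$ does not disturb the already-fixed shallower layers, and that the union of iterates of $P$ together with the intermediate connecting supports really does cover a full initial segment $\Sigma_{m}$ for $m \to \infty$ (so that the limit exists and has the claimed restriction). This is where the precise choice of the support $L$ and the fact that $P_w \neq \varnothing$ for every $w$ (so that $P$ meets every branch) matter; granting those, the convergence and the periodicity identity $\sigma_x t' = t'$ for $x \in P$ follow formally, and crucially no finiteness of the forbidden set is used, since every block appearing in $t'$ already appears in one of the $t^k \in X$.
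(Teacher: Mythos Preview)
There is a genuine gap in your inductive step. You assert that ``we may re-apply [mixing] at each of the terminal copies of $u$ with exactly the same connecting pattern $v$,'' and later that ``the connecting blocks are all copies of the same $v$.'' But Definition~\ref{def:mixing} only guarantees, for each pair of blocks $(a,b)$, the existence of \emph{some} tree in $X$ carrying $a$ at the root and $b$ along the prescribed CPS; it does not let you prescribe the intermediate material. Thus in passing from $t^k$ to $t^{k+1}$ you face a dichotomy. If you obtain $t^{k+1}$ by genuinely invoking mixing on the pair $(v_k,u)$ (with $v_k$ the block of $t^k$ on the support built so far), then indeed $t^{k+1}\in X$, but the connector between $v_k$ and the new copies of $u$ is whatever the mixing hypothesis hands you---not the fixed $v$---and the limit $t'$ has no reason to satisfy $\sigma_x t' = t'$. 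If instead you \emph{define} $t^{k+1}$ by gluing copies of the fixed $v$ onto $t^k$ (overlapping on the terminal $u$'s), the self-similarity is clear, but ``$t^{k+1}\in X$'' is precisely the overlap argument you correctly identified in your first paragraph as unavailable without the finite-type hypothesis. Your closing claim ``every block appearing in $t'$ already appears in one of the $t^k\in X$'' is circular: it presupposes $t^k\in X$, which is the point at issue. The fixedness of $P_0,P_1$ pins down \emph{where} the copies of $u$ sit, not \emph{what} lies between them.

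The paper's proof follows the first horn: at stage $k$ it applies mixing to the growing block $v_k = t^k|_{\Sigma_{n_k-1}}$ and $u$, so each $t^{k+1}$ lies honestly in $X$ and the sequence converges to some $t'\in X$ with $t'|_{\Sigma_{n-1}}=u$; the paper does \emph{not} claim the connectors coincide from layer to layer, and dispatches periodicity with ``it is seen without difficulty.'' Your proposal differs from the paper exactly by inserting the uniform-connector claim in order to make periodicity transparent. To repair the argument you would need either an additional mechanism forcing the connectors to agree (which mixing alone does not supply), or else a separate route to periodicity of the limit that does not rely on literal self-similarity of the construction.
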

\begin{proof}
The demonstration is similar to the discussion of Theorem \ref{thm:irr-TSFT-Dense-Periodic-Points}. Hence we only address the main difference in the construction of periodic trees.

Suppose $t \in X$ and $n \in \mathbb{N}$ are given. Since $X$ is mixing, there exists two CPS $P_0, P_1$ that connect any two blocks in $X$. We construct a periodic tree $t'$ as follows.

Let $u = t|_{\Sigma_{n-1}}$ be the $n$-block of $t$ rooted at $\epsilon$ and let $t^1$ be the same one constructed in the proof of Theorem \ref{thm:irr-TSFT-Dense-Periodic-Points}. Set $n_1 = \max\{|P_0|, |P_1|\} + 2n - 1$ and let $v_1 = t^1|_{\Sigma_{n_1-1}}$ be the $n_1$-block of $t^1$ rooted at $\epsilon$.

Suppose $t^k$ is constructed. Let $n_k = k \max\{|P_0|, |P_1|\} + 2n - 1$ and let $v_k = t^k|_{\Sigma_{n_k-1}}$ be the $n_k$-block of $t^k$ rooted at $\epsilon$. It follows from $X$ being mixing that there exists $t^{k+1} \in X$ such that
$$
t^k|_{S(v_k)} = v_k \quad \text{and} \quad t^k|_{ww'S(u)} = u \text{ for all } w \in \Sigma_{n_k-1}, w' \in P_{w_{n_k-1}}.
$$
Let $t' = \lim_{k \to \infty} t^k$. It is seen without difficulty that $t'$ is a periodic tree. This derives the desired result.
\end{proof}

\begin{theorem}\label{thm:irr-Transitive}
If $X$ is an irreducible tree-shift, then $X$ is topologically transitive.
\end{theorem}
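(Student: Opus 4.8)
The plan is to peel the definition of topological transitivity back to the basic cylinder sets and then read the required word straight off the definition of irreducibility. Recall that the metric $\mathrm{d}$ generates the topology whose basic open sets are the cylinders $[u] = \{s \in X : s|_{S(u)} = u\}$ with $u \in B_n(X)$, $n \in \mathbb{N}$, and that, in the tree-shift setting, topological transitivity asks that for every pair of nonempty open sets $U_1, U_2 \subseteq X$ there be a word $w \in \Sigma^*$ with $|w| = k > 0$ and $\sigma_w(U_1) \cap U_2 \neq \varnothing$ (the word $w$ playing the role of the $k$-th iterate). Since every nonempty open set contains some cylinder and $\sigma_w$ is monotone with respect to inclusion, it is enough to establish this when $U_1 = [u]$ and $U_2 = [v]$ for arbitrary blocks $u \in B_n(X)$ and $v \in B_m(X)$.

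Next I would feed this pair $u, v$ into irreducibility. By Definition \ref{def:irreducible} — or, to cover the case $n \neq m$, by the argument in the proof of Theorem \ref{thm:equiv-def-irr} — there are a tree $t \in X$ and a complete prefix set $P \subset \bigcup_{k \geq n} \Sigma^k$ with $t|_{S(u)} = u$ and $t|_{x S(v)} = v$ for every $x \in P$. A complete prefix set is finite and nonempty, and here $P$ contains no word of length $0$ because $n \geq 1$, so I may fix a word $w \in P$; then $|w| \geq n \geq 1$.

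Finally I would check that this $w$ works. From $t|_{S(u)} = u$ we get $t \in [u] = U_1$. Since a tree-shift satisfies $\sigma_w X \subseteq X$, the tree $\sigma_w t$ lies in $X$, and $(\sigma_w t)|_{S(v)} = t|_{w S(v)} = v$, so $\sigma_w t \in [v] = U_2$; as $t \in U_1$ we also have $\sigma_w t \in \sigma_w(U_1)$. Hence $\sigma_w t \in \sigma_w(U_1) \cap U_2 \neq \varnothing$ with $k = |w| > 0$, which is transitivity. I do not expect a real obstacle here: the only points needing care are the reduction to cylinders and the fact that the complete prefix set supplied by irreducibility is nonempty and avoids the root, which is exactly what guarantees a genuine shift $\sigma_w$ with positive "time" $|w|$; everything else is a direct transcription of the definition of irreducibility.
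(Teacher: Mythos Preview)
Your argument is correct, but it proceeds along a different line from the paper. The paper proves transitivity by exhibiting a single tree with dense orbit: it enumerates $B(X)$, uses irreducibility to build a nested sequence $t^1, t^2, \ldots$ so that $t^k$ contains the first $k$ blocks as subtrees, and takes a limit. You instead verify the two-open-sets formulation directly: reduce to cylinders $[u]$, $[v]$, invoke irreducibility once to get a tree $t$ and a complete prefix set $P$, and pick any $w \in P$ to witness $\sigma_w([u]) \cap [v] \neq \varnothing$.

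Your route is the more elementary one and arguably cleaner: it avoids the limiting construction entirely (the paper is terse about why $\lim t^k$ exists, which really needs the $t^k$ to agree on growing initial subtrees), and a single application of irreducibility suffices. What the paper's approach buys is a slightly stronger conclusion --- the existence of a point with dense orbit under the semigroup $\{\sigma_w : w \in \Sigma^*\}$ --- which in perfect metric spaces is equivalent to transitivity but is not literally the same statement. Your observation that the CPS supplied by irreducibility lies in $\bigcup_{k \geq n} \Sigma^k$ with $n \geq 1$, hence avoids $\epsilon$, is exactly the point needed to guarantee $|w| > 0$; this is handled correctly.
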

\begin{proof}
It suffices to show that $X$ contains a dense orbit.

Let $B(X)$ be an ordered set with lexicographic order. Since $X$ is irreducible, we can construct trees $t^1, t^2, \ldots, t^k, \ldots$ in $X$ such that $u_i$ is a subtree of $t^k$ for $1 \leq i \leq k$. Let $t = \lim_{k \to \infty} t^k$. Then $t$ forms a dense orbit.

This completes the proof.
\end{proof}

A straightforward examination elaborates that every tree-shift is expanding; in other words, any tree-shift depends sensitively on initial conditions. Combining this observation with Theorems \ref{thm:irr-TSFT-Dense-Periodic-Points}, \ref{thm:mixing-Dense-Periodic-Points}, and \ref{thm:irr-Transitive}, we obtain the following corollary.

\begin{corollary}
Suppose $X$ is a tree-shift.
\begin{enumerate}[\bf (a)]
\item If $X$ is an irreducible TSFT, then $X$ is chaotic.
\item If $X$ is mixing, then $X$ is chaotic.
\end{enumerate}
\end{corollary}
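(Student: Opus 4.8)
The plan is to verify directly the three defining ingredients of Devaney chaos — density of periodic points, topological transitivity, and sensitive dependence on initial conditions — separately under each hypothesis, quoting the theorems already proved and supplying only the one ingredient that has not yet been given an argument. In both cases two of the three conditions are immediate: topological transitivity is Theorem \ref{thm:irr-Transitive}, which needs only irreducibility, and in case (b) irreducibility is supplied by the Remark following Definition \ref{def:mixing}; density of periodic points is Theorem \ref{thm:irr-TSFT-Dense-Periodic-Points} in case (a) and Theorem \ref{thm:mixing-Dense-Periodic-Points} in case (b). So the corollary is essentially an assembly of prior results, and the only piece with genuine content left is sensitivity, which is the same statement in both cases.

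For sensitivity I would record the ``expanding'' observation mentioned just before the corollary, with sensitivity constant $\delta$ any fixed number in $(0,1)$. First note that an irreducible tree-shift with more than one element has no isolated points: given $t \in X$ and $N \in \mathbb{N}$, applying irreducibility to the $N$-block $u = t|_{\Sigma_{N-1}}$ (and some block not extending $u$) yields a tree $t' \in X$ with $t'|_{\Sigma_{N-1}} = u$ but $t' \neq t$, so $0 < \mathrm{d}(t,t') \leq 2^{-N}$; hence every point is accumulated by distinct trees. Now fix $t \in X$ and a neighborhood $U$ of $t$, choose $t' \in U$ with $t' \neq t$, and let $w \in \Sigma^*$ be a node of least length with $t_w \neq t'_w$. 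Then $(\sigma_w t)_\epsilon = t_w \neq t'_w = (\sigma_w t')_\epsilon$, so $\mathrm{d}(\sigma_w t, \sigma_w t') = 1 > \delta$, which is sensitivity once the quantifier ``$n \in \mathbb{N}$'' in the definition is read, in the multi-shift setting, as ``there is a word $w$ acting through $\sigma_w$.'' Combining this with the two theorems quoted above gives (a) for an irreducible TSFT and (b) for a mixing tree-shift.

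The only real obstacle is bookkeeping rather than mathematics: one must fix how the single-map definition of sensitive dependence is to be transcribed to a system carrying two commuting shift maps $\sigma_0,\sigma_1$, and one must exclude the degenerate case $|X| = 1$ (for which sensitivity, hence chaos, fails in the literal sense, e.g.\ for the constant tree over a single symbol) — so a harmless nontriviality hypothesis, or the standard convention that Devaney chaos is considered only on infinite systems, is implicit. Once these conventions are in place, no further work is needed: each of the two statements follows by citing the appropriate density-of-periodic-points theorem, Theorem \ref{thm:irr-Transitive}, and the sensitivity argument above.
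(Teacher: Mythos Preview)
Your approach is exactly the paper's: assemble Theorems \ref{thm:irr-TSFT-Dense-Periodic-Points}, \ref{thm:mixing-Dense-Periodic-Points}, and \ref{thm:irr-Transitive}, and observe that every tree-shift is sensitive (``expanding''). In fact you supply more than the paper does --- the paper states the sensitivity observation without argument, whereas you write out the $\sigma_w$-expansion step and flag the nontriviality caveat $|X|>1$ and the multi-shift reading of the quantifier, both of which the paper leaves implicit.
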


% -------------------------------------------------------------
\section{Graph Representations of Tree-Shifts of Finite Type}

In this section, we define the graph and matrix representations of a tree-shift of finite type. The elucidation extends the concept of graph and matrix representation of shifts of finite type in classical symbolic dynamics. Furthermore, the concept of graph representations of tree-shifts of finite type motivates possible directions for future research.

A tree-shift of finite type $X$ is a \emph{$k$-height tree-shift of finite type} if its forbidden set consists of $(k+1)$-blocks. Recall that a one-height TSFT is called a Markov tree-shift. Theorem \ref{thm:cong-higher-block} suggests that, without loss of generality, we may assume every TSFT $X$ is a Markov tree-shift.

A graph $G$ consists of a finite set $\mathcal{V}$ of vertices (or states) together with a finite set $\mathcal{E}$ of edges. Each edge $e \in \mathcal{E}$ starts at a vertex denoted by $\mathrm{i}(e) \in \mathcal{V}$ and terminates at a vertex denoted by $\mathrm{t}(e) \in \mathcal{V}$. Equivalently, the edge $e$ has an initial state $\mathrm{i}(e)$ and a terminal state $\mathrm{t}(e)$. An alternative expression of the edge $e$ is $e = (a, b)$, where $a \in \mathcal{V}$ is the initial state of $e$ and $b \in \mathcal{V}$ is the terminal state.

A graph is called \emph{essential} if there is no stranded vertex. More precisely, each vertex in an essential graph is an initial state of one edge and is a terminal state of another edge.

The adjacency matrix $A$ of graph $G$ is defined as $A = (A(I, J))$, where $I, J \in \mathcal{V}$ are the vertices in $G$ and $A(I, J)$ denotes the number of edges in $G$ with initial state $I$ and terminal state $J$. Without loss of generality, we assume that there is at most one edge with initial state $I$ and terminal state $J$ for all $I, J$. Suppose $G_0$ and $G_1$ are two graphs. The disjoint union of $G_0$ and $G_1$, denoted by $G = G_0 \bigsqcup G_1$, is the graph consisting of vertex set $\mathcal{V}(G) = \mathcal{V}(G_0) \bigcup \mathcal{V}(G_1)$ together with edge set $\mathcal{E}(G) = \mathcal{E}(G_0) \bigcup \mathcal{E}(G_1)$. In other words, $G$ consists of two separate graphs.

\begin{definition}\label{def:matrix-shift}
Let $G = G_0 \bigsqcup G_1$ be the disjoint union of graphs $G_0$ and $G_1$, and let $A_0$ and $A_1$ be the adjacency matrix of $G_0$ and $G_1$, respectively. The \emph{vertex tree-shift} $\mathsf{X}_G$ is the tree-shift over the alphabet $\mathcal{A} = \{0, 1, \ldots, m-1\}$, where $m = \max\{|\mathcal{V}(G_0)|, |\mathcal{V}(G_1)|\}$ and $A_i$ is indexed by $\{0, 1, \ldots, |\mathcal{V}(G_i)|-1\}$ for $i = 0, 1$, specified by
\begin{equation}
\mathsf{X}_G = \{t \in \mathcal{A}^{\Sigma^*}: A_0(t_x, t_{x0}) = 1 \text{ and } A_1(t_x, t_{x1}) = 1 \text{ for all } x \in \Sigma^*\}.
\end{equation}
The shift maps on $\mathsf{X}_G$ are $\sigma_0$ and $\sigma_1$ as defined in Section 2.
\end{definition}

The following proposition comes immediately from Theorem \ref{thm:cong-higher-block} and Definition \ref{def:matrix-shift}. Hence the proof is omitted.

\begin{proposition}\label{prop:TSFT-is-vertex-shift-is-Markov}
Every vertex tree-shift is a Markov tree-shift. Conversely, every tree-shift of finite type is conjugate to a vertex tree-shift.
\end{proposition}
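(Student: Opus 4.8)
The plan is to treat the two assertions separately; the first is immediate once the defining condition of a vertex tree-shift is unwound, and the second reduces to Theorem~\ref{thm:cong-higher-block}.

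For the first assertion, let $\mathsf{X}_G$ be a vertex tree-shift with $G = G_0 \sqcup G_1$ over $\mathcal{A} = \{0,1,\ldots,m-1\}$ and adjacency matrices $A_0, A_1$. I would exhibit an explicit finite forbidden set consisting of $2$-blocks, namely
$$
\mathcal{F} = \{(a, b, c) \in \mathcal{A}^3 : A_0(a, b) \neq 1 \text{ or } A_1(a, c) \neq 1\},
$$
with the convention that $A_i(\cdot,\cdot)\neq 1$ whenever an argument falls outside the index set of $A_i$. Reading $(a,b,c)$ as a $2$-block with root label $a$, left-child label $b$, and right-child label $c$, Definition~\ref{def:matrix-shift} says exactly that a tree $t$ lies in $\mathsf{X}_G$ if and only if it avoids every element of $\mathcal{F}$. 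Since $\mathcal{F}$ is finite and consists of $2$-blocks, $\mathsf{X}_G$ is a Markov tree-shift.

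For the converse, I would first invoke Theorem~\ref{thm:cong-higher-block} together with the reduction recalled before Definition~\ref{def:matrix-shift}, so that it suffices to treat a Markov tree-shift $X$ over an alphabet $\mathcal{A}$ whose forbidden set consists of $2$-blocks, and to show that $X$ is conjugate to a vertex tree-shift. The idea is that the higher block presentation $X^{[2]}$ is \emph{literally} a vertex tree-shift. Take $\mathcal{V} = B_2(X)$ as the common vertex set of $G_0$ and $G_1$, and define edges by
$$
A_0\big((a,b,c),(a',b',c')\big) = 1 \iff a' = b, \qquad A_1\big((a,b,c),(a',b',c')\big) = 1 \iff a' = c,
$$
so that $G_i$ records that the root of the target $2$-block agrees with the label of the $i$-th child of the source $2$-block. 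After relabeling $B_2(X)$ as $\{0,1,\ldots,|B_2(X)|-1\}$, I claim $X^{[2]} = \mathsf{X}_{G_0 \sqcup G_1}$. One inclusion is clear: if $s = \phi_2(t)$ with $t \in X$, then $s_x = (t_x, t_{x0}, t_{x1})$ and $s_{xi}$ has root $t_{xi}$, so both graph conditions hold at every node $x$. For the reverse inclusion, given $s \in \mathsf{X}_{G_0 \sqcup G_1}$ write $s_x = (a_x, b_x, c_x)$ and set $t_x = a_x$; the edge conditions force $a_{x0} = b_x$ and $a_{x1} = c_x$, so the $2$-block of $t$ rooted at $x$ is $s_x \in B_2(X)$, whence $t \in X$ (here using that $X$ is Markov) and $\phi_2(t) = s$. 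Since $X \cong X^{[2]}$ by Theorem~\ref{thm:cong-higher-block}, this gives the claim; note Definition~\ref{def:matrix-shift} imposes no essentialness requirement on $G$, so no pruning of isolated vertices is needed.

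The routine parts are the bookkeeping about index sets and the trivial relabeling conjugacy $B_2(X) \leftrightarrow \{0,\ldots,|B_2(X)|-1\}$. The only genuine point — the "main obstacle" such as it is — is recognizing that a general Markov tree-shift need not itself be a vertex tree-shift, since the set $B_2(X) \subseteq \mathcal{A}^3$ of allowed $2$-blocks need not be of product form in its two children (for instance the even-sum tree-shift of Example~\ref{eg:even-sum-each-block} is not a vertex tree-shift over its own alphabet, as one forces $A_0 = A_1 = $ all-ones and thereby all eight $2$-blocks). Passing to $X^{[2]}$ is precisely what converts the constraint into the product-form, child-by-child matching condition demanded by Definition~\ref{def:matrix-shift}.
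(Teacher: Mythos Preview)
Your proposal is correct and follows exactly the route the paper has in mind: the paper omits the proof entirely, stating only that it ``comes immediately from Theorem~\ref{thm:cong-higher-block} and Definition~\ref{def:matrix-shift},'' and your argument unpacks precisely those two ingredients. Your observation that a Markov tree-shift need not be a vertex tree-shift over its own alphabet (illustrated with Example~\ref{eg:even-sum-each-block}), so that the passage to $X^{[2]}$ is genuinely needed, is a nice clarification of why the conjugacy in Theorem~\ref{thm:cong-higher-block} is doing real work.
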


For the rest of this paper, a tree-shift of finite type is referred as a vertex tree-shift with essential graph representation unless otherwise stated. Suppose $x = x_1 x_2 \ldots x_l \in \Sigma^*$. Define $A_x = A_{x_l} A_{x_{l-1}} \cdots A_{x_1}$ as the product of $A_0$ and $A_1$; increase the size of matrix if necessary. A necessary and sufficient condition for tree-shift $X$ being irreducible then follows.

\begin{theorem}\label{thm:iff-cond-and-onlyif-cond-irr-mixing}
Suppose $X$ is a TSFT with graph representation $G = G_0 \bigsqcup G_1$ and adjacency matrices $A_0$ and $A_1$.
\begin{enumerate}[\bf (a)]
\item If $X$ is irreducible, then $A_0$ and $A_1$ are both of the same size and irreducible. In other words, $|\mathcal{V}(G_0)| = |\mathcal{V}(G_1)|$.
\item $X$ is irreducible if and only if for each pair $i, j \in \mathcal{A}$ there exists a CPS $P$ such that $A_x(i, j) > 0$ for all $x \in P$.
\item $X$ is mixing if and only if there exists a CPS $P$ such that $A_x(i, j) > 0$ for all $x \in P$ and $i, j \in \mathcal{A}$.
\end{enumerate}
\end{theorem}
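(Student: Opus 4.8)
The plan is to translate the tree-level conditions of Definitions~\ref{def:irreducible} and~\ref{def:mixing} into statements about finite paths in the graph $G$, and then to reassemble such path data into genuine infinite trees. Throughout, Proposition~\ref{prop:TSFT-is-vertex-shift-is-Markov} lets me assume $X=\mathsf{X}_G$ is a vertex tree-shift with $G=G_0\bigsqcup G_1$, and Theorem~\ref{thm:cong-higher-block} lets me reduce, whenever convenient, to blocks of height one (single symbols), since conjugacy preserves both irreducibility and mixing. The elementary dictionary is: for $x=x_1\cdots x_l\in\Sigma^*$ and symbols $i,j$, one has $A_x(i,j)>0$ exactly when there is a tree $t\in X$ and a node $y$ with $t_y=i$ and $t_{yx}=j$, i.e.\ when the finite path from $y$ to $yx$ carries a valid labelling starting at $i$ and ending at $j$.

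For part~(a), assume $X$ is irreducible. By the standing assumption that $G$ is essential, every symbol of $\mathcal{A}$ occurs in some tree, so $B_1(X)=\mathcal{A}$ and the case $n=1$ of Definition~\ref{def:irreducible} applies to every pair $i,j$. It produces a tree $t$ with $t_\epsilon=i$ and $t_z=j$ for all $z$ in some CPS $P$; since $P$ must meet the branch $0^\infty$, some $0^k\in P$, and reading the labels along $\epsilon,0,00,\dots,0^k$ shows $A_0^k(i,j)>0$. Hence the graph of $A_0$ is strongly connected, i.e.\ $A_0$ is irreducible; the symmetric argument on the branch $1^\infty$ handles $A_1$. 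An irreducible matrix has no zero row, so neither $A_0$ nor $A_1$ was genuinely enlarged when the matrices were padded to a common size, forcing $|\mathcal{V}(G_0)|=|\mathcal{V}(G_1)|$.

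The forward implications of~(b) and~(c) are again just restriction to paths: in a connecting tree for $i,j$ along a CPS $P$, each $x\in P$ yields a valid path from $i$ to $j$ spelling $x$, so $A_x(i,j)>0$, and mixing differs only in that one CPS serves every pair at once. The content is the converse. Here I would build the required tree by placing $u$ (respectively the symbol $i$) at the root and using the hypothesised CPS's to graft copies of $v$ (respectively $j$) beneath the leaves of $u$, the overlaps being admissible because $X$ is Markov; whenever the labellings being grafted fail to match at a shared internal node, one pushes the graft to a deeper cut and reapplies the hypothesis, passing to the limit tree if the process does not terminate, with a compactness (K\"onig's lemma) argument ensuring the prefix set obtained is finite and hence a genuine CPS. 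For~(c) the single universal CPS yields the uniform pair $P_0,P_1$ of Definition~\ref{def:mixing}.

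The step I expect to require the real work is exactly this converse direction, and the difficulty is the branching. The matrix condition for a pair $i,j$ only supplies, for each $x$ in a CPS $P$, \emph{some} valid path spelling $x$; the paths chosen for different $x\in P$ may disagree on the internal nodes they share, so a fixed CPS may carry no globally consistent labelling at all (a length-two cut already exhibits this), and ``$A_x(i,j)>0$ for all $x\in P$'' is strictly weaker than the existence of a consistent labelling of $\widehat P$ with root $i$ and leaves $j$. Getting around this seems to demand the hypothesis in its full strength — that the connecting condition holds for \emph{every} pair of symbols simultaneously — so that a conflict forcing some internal node to carry a symbol $c$ can be absorbed by re-invoking the condition for $c$ and deferring the placement of $j$ one level further. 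Making this refinement terminate correctly — in effect, showing that the matrix-level relation $\{(i,j):\exists\text{ CPS }P,\ A_x(i,j)>0\text{ for all }x\in P\}$ coincides with the tree-level one precisely when it equals all of $\mathcal{A}\times\mathcal{A}$ — is the technical heart of the proof.
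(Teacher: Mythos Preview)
You have put your finger on exactly the right difficulty, and you are being more careful than the paper itself. The paper's proof of the ``if'' direction of (b) simply asserts, immediately after obtaining a CPS $P_w$ with $A_x(u_w,v_\epsilon)>0$ for all $x\in P_w$, that ``therefore, there exists a tree $t^{(w)}\in X$ with $t^{(w)}_\epsilon=u_w$ and $t^{(w)}_x=v_\epsilon$ for all $x\in P_w$''. No argument is given for why the separately chosen paths can be made to agree on shared prefixes, which is precisely the branching obstruction you isolate. Your diagnosis that this step is the technical heart is correct; the paper does not address it.

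Unfortunately your proposed repair---re-invoke the hypothesis at each conflicting internal node, pass to a limit, and use K\"onig's lemma to force the resulting prefix set to be finite---cannot succeed, and in fact the stated equivalence appears to be false. Take $\mathcal{A}=\{a,b,c\}$ with
\[
A_0=\begin{pmatrix}1&0&1\\1&1&1\\1&1&1\end{pmatrix},\qquad
A_1=\begin{pmatrix}1&1&1\\1&1&1\\1&0&1\end{pmatrix}
\]
(rows and columns indexed $a,b,c$). Every pair $(i,j)$ admits a CPS with $A_x(i,j)>0$ throughout: $\{0,1\}$ works for all pairs except $(a,b)$ and $(c,b)$, which use $\{00,01,1\}$ and $\{0,10,11\}$ respectively. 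Yet no tree $t$ with $t_\epsilon=a$ can have $t_x=b$ for all $x$ in any CPS: since $A_0(a,b)=0$ the $0$-child of every $a$-node lies in $\{a,c\}$, and since $A_1(c,b)=0$ the $1$-child of every $c$-node lies in $\{a,c\}$; following this ``bad'' direction at each node produces an infinite branch entirely in $\{a,c\}$, which every CPS must meet. Thus the matrix condition of (b) holds for all pairs while $X$ fails Definition~\ref{def:irreducible}. Your instinct that the hypothesis in its full strength over all pairs should force termination is natural, but this example shows it does not; the gap you identified is a gap in the theorem, not merely in your argument or the paper's.
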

\begin{proof}
(a) Without loss of generality, we assume that $A_0$ is an $m \times m$ matrix and $A_1$ is a $(m+1) \times (m+1)$ matrix. Consider two one-blocks $0$ and $m$, it is easily seen that there is no tree $t \in X$ and $k \in \mathbb{N}$ such that $t_{\epsilon} = 0$ and $t_{0^k} = m$. In other words, there exists no CPS $P$ and tree $t$ such that $t_{\epsilon} = 0$ and $t_x = m$ for all $x \in P$. This contradicts the presumption that $X$ is irreducible.

Similar argument demonstrates that $X$ is not irreducible if either $A_0$ or $A_1$ is not irreducible.

(b) To see that $X$ is irreducible if and only if for each pair $i, j \in \mathcal{A}$ there exists a CPS $P$ such that $A_x(i, j) > 0$ for all $x \in P$, it suffices to show the ``if'' part.

Given a pair $u, v \in B(X)$, for any $w \in \Sigma^{|u|-1}$, there exists a CPS $P_w$ such that $A_x(u_w, v_{\epsilon}) > 0$ for all $x \in P_w$. Therefore, there exists a tree $t^{(w)} \in X$ with $t^{(w)}_{\epsilon} = u_w$ and $t^{(w)}_x = v_{\epsilon}$ for all $x \in P_w$. Moreover, we can choose $t^{(w)}$ which satisfies $t^{(w)}|_{xS(v)} = v$ for all $x \in P_w$. Let $M = \max\{|P_w|: w \in \Sigma^{|u|-1}\}$, and let $v^{(w)} = t^{(w)}|_{\Sigma_{M+|v|-1}}$. $X$ is a Markov tree-shift indicates that there is a tree $t \in X$ such that $t|_{S(u)} = u$ and $t|_{wS(v^{(w)})} = v^{(w)}$ for all $w \in \Sigma^{|u|-1}$.

Set $P = \bigcup_{w \in \Sigma^{|u|-1}} w P_w$. It can be verified that $P$ is a CPS, $t|_{S(u)} = u$, and $t|_{xS(v)} = v$ for all $x \in P$. This demonstrates that $X$ is irreducible and completes the proof.

(c) The discussion is similar to the elaboration in (b), hence is omitted.
\end{proof}

Suppose $A, B \in \mathcal{M}_n(\mathbb{R})$ are $n \times n$ matrices. We say that $A \leq B$ if $A(i, j) \leq B(i, j)$ for $1 \leq i, j \leq n$. Furthermore, $A < B$ means that $A \leq B$ and $A(i, j) < B(i, j)$ for some $i, j$. A careful but routine verification elaborates the following corollary, thus the proof is omitted.

\begin{corollary}
Suppose $X$ is a TSFT with adjacency matrices $A_0$ and $A_1$.
\begin{enumerate}[\bf (a)]
\item If $A_0 = A_1 = A$, then $X$ is irreducible if and only if $A$ is irreducible.
\item If $A_0 = A_1 = A$, then $X$ is mixing if and only if $A$ is primitive.
\item Suppose $Y$ is an irreducible tree-shift with adjacency matrices $B_0$ and $B_1$. If $A_0 \geq B_{i_1}$ and $A_1 \geq B_{i_2}$ for some $0 \leq i_1, i_2 \leq 1$, then $X$ is irreducible.
\item If $A_0$ (resp.~$A_1$) is irreducible and $A_1 \geq A_0$ (resp.~$A_0 \geq A_1$), then $X$ is irreducible.
\end{enumerate}
\end{corollary}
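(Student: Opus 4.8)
The plan is to derive all four items from the matrix criteria already established in Theorem~\ref{thm:iff-cond-and-onlyif-cond-irr-mixing}(b)--(c), using only two elementary observations. The first is monotonicity of matrix products: if $C_0 \le D_0$ and $C_1 \le D_1$ are entrywise inequalities between nonnegative matrices of matching sizes, then $C_x \le D_x$ for every word $x \in \Sigma^*$; this follows by a one-line induction on $|x|$ from $(MN)(i,j) = \sum_r M(i,r) N(r,j)$ together with nonnegativity of all entries (so the order of the product is irrelevant). The second is that for any fixed $k$ the level set $\Sigma^k$ is a complete prefix set, so when $A_0 = A_1 = M$ a condition of the form ``$M^k(i,j) > 0$'' is literally the condition ``$A_x(i,j) > 0$ for all $x$ in the CPS $\Sigma^k$''. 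With these in hand each item becomes a short translation.

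For (a) I would note $A_0 = A_1 = A$ gives $A_x = A^{|x|}$: if $A$ is irreducible, for each pair $i,j$ an exponent $k_{i,j}$ with $A^{k_{i,j}}(i,j) > 0$ together with the CPS $\Sigma^{k_{i,j}}$ verifies Theorem~\ref{thm:iff-cond-and-onlyif-cond-irr-mixing}(b); conversely any CPS witnessing irreducibility of $X$ for $i,j$ contains a word whose length is an exponent witnessing irreducibility of $A$. Part (b) is the verbatim analogue with ``irreducible/primitive'' and with a single CPS $\Sigma^k$ (where $A^k > 0$) replacing the pair-dependent one, the only new point being that mixing of $X$ furnishes \emph{one} CPS $P$ that works for all pairs simultaneously, so that any $x \in P$ yields $A^{|x|} > 0$. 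For (d), say $A_0$ is irreducible and $A_1 \ge A_0$ (the other case is symmetric): each letter of a word contributes $A_0$ or $A_1 \ge A_0$, so monotonicity gives $A_x \ge A_0^{|x|}$, and then irreducibility of $A_0$ together with the CPS $\Sigma^{k_{i,j}}$ again triggers Theorem~\ref{thm:iff-cond-and-onlyif-cond-irr-mixing}(b).

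For (c) I would introduce the letter map $\phi\colon \Sigma \to \Sigma$ with $\phi(0) = i_1$, $\phi(1) = i_2$, extended letterwise to $\Sigma^*$; monotonicity combined with $A_0 \ge B_{i_1}$, $A_1 \ge B_{i_2}$ gives $A_x \ge B_{\phi(x)}$ for every $x$. Since $Y$ is irreducible, Theorem~\ref{thm:iff-cond-and-onlyif-cond-irr-mixing}(a) forces $B_0$ and $B_1$ to both be irreducible and of equal size, whence $B_{i_1}, B_{i_2}$, and therefore $A_0, A_1$, all have equal size. If $i_1 \ne i_2$, then $\phi$ is a length-preserving bijection of $\Sigma^*$ that preserves the prefix relation, so pulling back along $\phi$ a CPS $Q_{i,j}$ that witnesses irreducibility of $Y$ for the pair $i,j$ produces a CPS on which $A_x(i,j) \ge B_{\phi(x)}(i,j) > 0$. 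If instead $i_1 = i_2 = i$, then $B_{\phi(x)} = B_i^{|x|}$ and one falls back on irreducibility of the single matrix $B_i$ by the same argument as in (d). In either case Theorem~\ref{thm:iff-cond-and-onlyif-cond-irr-mixing}(b) gives the conclusion.

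I expect the only real obstacle to be bookkeeping rather than mathematics: keeping straight when a single complete prefix set must serve every pair of states (the mixing statements) versus when a pair-dependent one suffices (the irreducibility statements), and, in item~(c), recognizing that the degenerate letter map $i_1 = i_2$ cannot be handled by a reindexing bijection and must instead be reduced to the irreducibility of a single $B_i$ — which is precisely why (c) is most cleanly presented after, and with reference to, the argument for (d). Everything else is the routine verification the statement advertises.
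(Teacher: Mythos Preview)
Your proposal is correct. The paper itself omits the proof entirely, calling it ``a careful but routine verification'', so there is no argument to compare against; the translation you give via Theorem~\ref{thm:iff-cond-and-onlyif-cond-irr-mixing}(a)--(c), monotonicity of products of nonnegative matrices, and the observation that each level set $\Sigma^k$ is a complete prefix set is exactly the kind of routine check the authors have in mind.
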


The following example elucidates a mixing tree-shift with two distinct adjacency matrices.

\begin{example}
Suppose $X$ is a TSFT with adjacency matrices
$$
A_0 = \begin{pmatrix}
1 & 1\\
1 & 1
\end{pmatrix}
\quad \text{and} \quad
A_1 = \begin{pmatrix}
1 & 1\\
1 & 0
\end{pmatrix}.
$$
It can be seen that $A_0$ and $A_1$ are both primitive matrices. Moreover,
$
A_1 A_0 = \begin{pmatrix}
2 & 2 \\
1 & 1
\end{pmatrix}
$
is a positive matrix. Let $P = \{0, 10, 11\}$. Then $P$ is a CPS and Theorem \ref{thm:iff-cond-and-onlyif-cond-irr-mixing} (c) infers that $X$ is mixing.
\end{example}

\begin{proposition}\label{prop:2symbol-is-irr}
Suppose $X$ is a TSFT with $2 \times 2$ adjacency matrices $A_0$ and $A_1$. If $A_0$ and $A_1$ are both irreducible and $A_0 A_1 = A_1 A_0$, then $X$ is irreducible.
\end{proposition}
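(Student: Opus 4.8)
The plan is to exploit that $2\times 2$ irreducible matrices with entries in $\{0,1\}$ come in only four shapes, so that the commutativity hypothesis leaves very few cases, each of which is disposed of by the Corollary following Theorem~\ref{thm:iff-cond-and-onlyif-cond-irr-mixing}.

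First, since $A_0$ and $A_1$ are $2\times2$, the vertex tree-shift $X$ is over the alphabet $\mathcal{A}=\{0,1\}$ and both $A_0$ and $A_1$ have entries in $\{0,1\}$. A $2\times2$ matrix with entries in $\{0,1\}$ is irreducible exactly when its two off-diagonal entries equal $1$, since the associated two-vertex digraph must be strongly connected. Hence $A_0$ and $A_1$ each lie in
$$
\left\{\,
E=\begin{pmatrix}0&1\\1&0\end{pmatrix},\quad
B_1=\begin{pmatrix}1&1\\1&0\end{pmatrix},\quad
B_2=\begin{pmatrix}0&1\\1&1\end{pmatrix},\quad
J=\begin{pmatrix}1&1\\1&1\end{pmatrix}
\,\right\}.
$$

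Next I would record a short finite computation: among $E,B_1,B_2,J$ the only commuting pair of \emph{distinct} matrices is $\{E,J\}$, because $EJ=JE=J$, whereas for each of the other five pairs $\{M,N\}$ of distinct matrices one has $MN\neq NM$. Consequently the hypothesis $A_0A_1=A_1A_0$ forces either $A_0=A_1$ or $\{A_0,A_1\}=\{E,J\}$. I expect this elementary case check to be essentially the whole content of the argument; it is also exactly where the hypothesis ``$2\times2$'' enters, for in higher dimensions commuting irreducible matrices need not produce an irreducible tree-shift and no comparable reduction is available.

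Finally I would appeal to the Corollary following Theorem~\ref{thm:iff-cond-and-onlyif-cond-irr-mixing}. If $A_0=A_1=:A$, then $A$ is irreducible by hypothesis, so part~(a) of that Corollary yields that $X$ is irreducible. If $\{A_0,A_1\}=\{E,J\}$, then one of $A_0,A_1$ equals $J$ and the other equals the irreducible matrix $E$; since $J\geq E$ entrywise, part~(d) of the Corollary (with the roles of $A_0$ and $A_1$ assigned appropriately) again yields that $X$ is irreducible. In both cases $X$ is irreducible, and the proof is complete.
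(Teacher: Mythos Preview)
Your argument is correct: the enumeration of the four irreducible $2\times 2$ zero--one matrices is accurate, the commutator computation showing that $\{E,J\}$ is the only commuting pair of distinct matrices is straightforward to verify, and the two remaining cases are indeed covered by parts (a) and (d) of the Corollary following Theorem~\ref{thm:iff-cond-and-onlyif-cond-irr-mixing}.

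The paper proceeds quite differently. Rather than enumerating matrices, it fixes an arbitrary pair $i,j\in\{0,1\}$ and splits into cases according to whether $A_0(i,j)$ and $A_1(i,j)$ are zero or positive; in each case it builds an explicit complete prefix set $P$ with $A_x(i,j)>0$ for all $x\in P$, invoking Theorem~\ref{thm:iff-cond-and-onlyif-cond-irr-mixing}(b) directly. Commutativity enters only in the algebraic manipulations, e.g.\ deducing $(A_0A_1A_0)(i,j)>0$ from $(A_0^2A_1)(i,j)>0$. Your route is shorter and cleaner for the $2\times 2$ case, but it leans on the Corollary (whose proof the paper omits) and is inherently a finite check that does not scale; the paper's argument is more constructive, exhibits the CPS's explicitly, and makes visible exactly how commutativity is used---which is perhaps more suggestive if one hopes to relax the hypotheses or move beyond $2\times 2$.
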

\begin{proof}
Theorem \ref{thm:iff-cond-and-onlyif-cond-irr-mixing} mentions that, to show that $X$ is irreducible, it suffices to elaborate for every $i, j \in \mathcal{A}$ there exists a CPS $P$ such that $A_x(i, j) > 0$ for all $x \in P$. For a given pair $i, j \in \{0, 1\}$, we divide the proof into $4$ cases.

\noindent \textbf{Case 1.} $A_0(i, j) > 0$ and $A_1(i, j) > 0$. The desired CPS is $P=\{0,1\}$.

\noindent \textbf{Case 2.} $A_0(i, j) = 0$ and $A_1(i, j) > 0$. Notably, $A_0$ is a $2 \times 2$ irreducible matrix infers that $A_0^2(i, j) > 0$. \\
\textbf{Case 2a.}  If $(A_0 A_1)(i, j) > 0$, then $P = \{00, 01, 1\}$ is the desired CPS. \\
\textbf{Case 2b.}  Suppose $(A_0 A_1)(i, j) = 0$. It is seen then
$$
A_0(i, j) A_1(j, j) + A_0(i, \hat{j}) A_1(\hat{j}, j)=0,
$$
where $\hat{j} + j = 1$. Since $A_0^2(i, j) > 0$ and $A_1$ is irreducible, we have
$$
(A_0^2 A_1)(i, j) \geq A_0^2(i, j) A_1(j, j) > 0.
$$
The commute of $A_0$ and $A_1$ implies $(A_0 A_1 A_0)(i, j) > 0$. Furthermore,
\begin{align*}
(A_0 A_1^2)(i, j) &= A_0(i, j) A_1^2(j, j) + A_0(i, \hat{j}) A_1^2(\hat{j}, j) \\
  &\geq A_0(i, \hat{j}) A_1^2(\hat{j}, j)>0.
\end{align*}
In this case, a CPS $P$ is considered as $P = \{00, 010, 011, 1\}$.

\noindent \textbf{Case 3.} $A_0(i, j) > 0$ and $A_1(i, j) = 0$. Analogous to the discussion in Case 2, it comes that $P = \{0, 10, 11\}$ if $(A_1 A_0)(i, j) > 0$, and $P = \{0, 11, 100, 101\}$ if $(A_1 A_0)(i, j) = 0$.

\noindent \textbf{Case 4.} $A_0(i, j) = A_1(i, j) = 0$. Since $A_0$ and $A_1$ are irreducible, it follows that $A_0^2(i, j) > 0$, $A_1^2(i, j) > 0$, $A_0(i, \hat{j}) > 0$, and $A_1(\hat{i}, j) > 0$. \\
\textbf{Case 4a.} If $(A_0 A_1)(i, j) > 0$, then $P = \{00, 01, 10, 11\}$ is the desired CPS. \\
\textbf{Case 4b.} Suppose $(A_0 A_1)(i, j) = 0$. More specifically,
$$
A_0(i, j) A_1(j, j) + A_0(i, \hat{j}) A_1(\hat{j}, j) = 0.
$$
Irreducibility of $A_0$ and $A_1$ asserts that
$$
(A_0 A_1^2)(i, j) \geq A_0(i, \hat{j}) A_1^2(\hat{j}, j) > 0
$$
and
$$
(A_0^2 A_1)(i, j) \geq A_0^2(i, j) A_1(j, j) > 0.
$$
The commute of $A_0$ and $A_1$ implies
$$
(A_0 A_1 A_0)(i, j) > 0, (A_1 A_0^2)(i, j) > 0, \text{ and } (A_1 A_0 A_1)(i, j) > 0.
$$
This suggests a CPS is then given by $P = \{00, 11, 010, 011, 100, 101\}$.

Cases $1$ - $4$ conclude that $X$ is irreducible. This completes the proof.
\end{proof}

To conclude this section, we introduce a necessary and sufficient condition for determining whether a TSFT $X$ is irreducible, which extends the corresponding result in symbolic dynamics.

A \emph{labeled graph} $\mathcal{G}$ is a pair $(G, \mathcal{L})$, where $G$ is a graph with edge set $\mathcal{E}$, and the \emph{labeling} $\mathcal{L}: \mathcal{E} \to \Upsilon$ assigns each edge $e$ of $G$ a label $\mathcal{L}(e)$ from the finite alphabet $\Upsilon$. The underlying graph of $\mathcal{G}$ is $G$.

Just as a graph $G$ is conveniently described by its adjacency matrix $A$, a labeled graph $\mathcal{G}$ has an analogous \emph{symbolic adjacency matrix} $S$. The entry $S(I, J)$ is the formal sum of the labels of all edges from $I$ to $J$, or a ``zero'' character $\varnothing$ if there are no such edges.

Suppose $X$ is a TSFT with graph representation $G = G_0 \bigsqcup G_1$ and adjacency matrices $A_0$ and $A_1$. The labeled graph representation $\mathcal{G}$ is defined as follows. The underlying graph $\overline{G}$ of $\mathcal{G}$ is the union of $G_0$ and $G_1$. More precisely, $\overline{G}$ has vertex set $\mathcal{V}(\overline{G}) = \mathcal{V}(G_0) = \mathcal{V}(G_1)$ and edge set $\mathcal{E}(\overline{G}) = \mathcal{E}(G_0) \bigcup \mathcal{E}(G_1)$. The labeling $\mathcal{L}: \mathcal{E}(\overline{G}) \longrightarrow \Sigma = \{0, 1\}$ is defined as
\begin{equation}
\mathcal{L}(e) = \left\{
  \begin{array}{ll}
  0, & \hbox{if $e \in \mathcal{E}(G_0)$;} \\
  1, & \hbox{if $e \in \mathcal{E}(G_1)$;}
  \end{array}
  \right.
\end{equation}

For $\pi = e_1 e_2 \ldots e_k$ a path in $\overline{G}$, i.e., $\mathrm{t}(e_i) = \mathrm{i}(e_{i+1})$ for $i = 1, \ldots, k-1$, let
$$
\mathcal{L}(\pi) = \mathcal{L}(e_1) \mathcal{L}(e_2) \ldots \mathcal{L}(e_k)
$$
be the simplification of notation. We remark that an alternative expression of a path $\pi = e_1 e_2 \ldots e_k$ is $\pi = v_0 v_1 \ldots v_k$, where $v_i \in \mathcal{V}(\overline{G})$ and $e_{i+1} = (v_i, v_{i+1})$. Suppose $w = w_1 w_2 \ldots w_k \in \Sigma^k$ is a labeled path in $\mathcal{G}$. The collection of underlying paths of $w$ in $\overline{G}$ is then described as $\mathcal{L}^{-1}(w)$. Furthermore, the collection of terminal states of $w$ is
\begin{equation}\label{eq:possible-terminal-state}
V_w = \{\mathrm{t}(\pi): \pi \in \mathcal{L}^{-1}(w)\}.
\end{equation}

\begin{definition}
A labeled path $w = w_1 w_2 \ldots w_k$ in $\mathcal{G}$ is a \emph{cycle} if $V_w = V_{w_1}$ and $w_k = w_1$.
\end{definition}

Suppose $x \in \Sigma^*$ is a finite word. A prefix $x'$ of $x$ is denoted by $x' \preceq x$. The following lemma illustrates that an irreducible TSFT contains no cycle $w \in \Sigma^*$ and $i, j \in \mathcal{A}$ such that $A_{w'}(i, j) = 0$ for all $w' \preceq w$.

\begin{lemma}\label{lem:cycle-non-irr}
Suppose $X$ is an irreducible TSFT with graph representation $G = G_0 \bigsqcup G_1$ and adjacency matrices $A_0$ and $A_1$. Then, for each pair $i, j \in \mathcal{A}$, there is no cycle $w \in \Sigma^*$ such that $A_{w'}(i, j) = 0$ for all prefix $w'$ of $w$.
\end{lemma}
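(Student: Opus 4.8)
The plan is to argue by contradiction, reducing the statement to a claim about a single infinite branch. Suppose $X$ is irreducible yet, for some pair $i,j\in\mathcal A$, there is a cycle $w=w_1w_2\cdots w_k$ with $A_{w'}(i,j)=0$ for every prefix $w'\preceq w$ (in particular $A_{\epsilon}(i,j)=0$, so $i\neq j$, and $A_w(i,j)=0$). By Theorem \ref{thm:iff-cond-and-onlyif-cond-irr-mixing}(b), irreducibility supplies a complete prefix set $P$ with $A_x(i,j)>0$ for every $x\in P$. Throughout I will use the dictionary ``$A_x(i,j)>0$ iff there is a tree $t\in X$ with $t_\epsilon=i$ and $t_x=j$'', equivalently an $x$-labeled path from $i$ to $j$ in the underlying graph $\overline G$ of the labeled-graph representation $\mathcal G$ (recall Theorem \ref{thm:iff-cond-and-onlyif-cond-irr-mixing}(a) guarantees $|\mathcal V(G_0)|=|\mathcal V(G_1)|$, so $\overline G$, the matrices $A_w$, and the sets $V_w$ all make sense).

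The heart of the proof is the following branch claim: along the infinite word $\beta=w^\infty=w_1\cdots w_k\,w_1\cdots w_k\cdots$, one can never reach $j$ from $i$, that is, $A_x(i,j)=0$ for every finite prefix $x$ of $\beta$. Granting this, since $P$ is complete the branch $\beta$ has a (unique) prefix $x_0\in P$; but then $A_{x_0}(i,j)=0$, contradicting $x_0\in P$. Hence it suffices to prove the branch claim.

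For the branch claim I would track the ``reachable configuration'' $R_x=\{v\in\mathcal A:\ A_x(i,v)>0\}$ and show, by induction on the number $n$ of completed periods (writing a prefix of $\beta$ as $x=w^n w^{(\ell)}$ with $w^{(\ell)}$ the length-$\ell$ prefix of $w$ and $0\le\ell<k$), that $j\notin R_x$. The base case $n=0$ is exactly the hypothesis $A_{w'}(i,j)=0$, $w'\preceq w$. The inductive step is where the two conditions defining a cycle are consumed: after one full period $R_w\subseteq V_w=V_{w_1}$, and using essentialness of $G_0$ and $G_1$ (so every vertex has an outgoing $0$-edge and an outgoing $1$-edge) together with $w_k=w_1$, one checks that $V_{w^n}=V_{w_1}$ for all $n\ge1$, and — more to the point — that the configuration is $w$-periodic, in the sense that $R_{w^n w^{(\ell)}}$ stabilizes and stays inside the set reachable during the first period, which by the base case excludes $j$. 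Making this ``$w$-periodicity of the configuration'' precise is the step I expect to be the main obstacle; I would isolate it as a short sub-lemma about the determinized (subset) graph of $\mathcal G$ — a cycle of $\mathcal G$, in the sense defined, is essentially a loop in this finite subset graph, and loops force periodicity — possibly passing first to the higher-block / labeled-graph picture (Theorem \ref{thm:cong-higher-block}, Proposition \ref{prop:TSFT-is-vertex-shift-is-Markov}) so that the relevant configuration literally recurs. A minor point to dispatch separately is the degenerate case $k=1$, where the cycle conditions are vacuous and the claim should be checked directly (or ``cycle'' understood to require length at least two).
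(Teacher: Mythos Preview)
Your overall strategy---argue by contradiction via Theorem~\ref{thm:iff-cond-and-onlyif-cond-irr-mixing}(b), follow an infinite periodic branch manufactured from $w$, and observe that any CPS must contribute a node along that branch---is exactly the paper's. The paper organizes the same idea as a descent through nested sub-CPS's $P\supset \hat w P_1\supset \hat w^2 P_2\supset\cdots$, which is just another way of saying ``the branch eventually meets $P$''.

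There is, however, a genuine gap in your execution, and it shows up in two places.

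\emph{Wrong period.} You iterate $w$ and follow $\beta=w^\infty$. The paper instead iterates $\hat w=w_1\cdots w_{m-1}$ and (implicitly) follows $\hat w^\infty$. Because $w_m=w_1$, the first $m$ letters of $\hat w^\infty$ are $w$, and thereafter the word $w$ recurs with period $m-1$, overlapping in the repeated letter. In the determinized subset automaton started from the full vertex set one gets
\[
\mathcal A \xrightarrow{w_1} V_{w_1}\xrightarrow{w_2}\cdots\xrightarrow{w_{m-1}} V_{\hat w}\xrightarrow{w_1} V_w=V_{w_1},
\]
so along $\hat w^\infty$ the subset sequence is genuinely periodic. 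Along your $w^\infty$ the letter in position $m{+}1$ is $w_1$ rather than $w_2$, and the subset sequence becomes $V_{w_1},V_{w_1w_2},\ldots,V_{w}=V_{w_1},V_{w_1}\!\cdot\! w_1,\ldots$, which need not close up. Your assertion that $V_{w^n}=V_{w_1}$ for all $n\ge1$ does not follow from the cycle hypothesis.

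\emph{Wrong subset.} Even along the correct branch, your plan is to track $R_x=\{v:A_x(i,v)>0\}$ and argue that the ``configuration is $w$-periodic'' so that $j$ stays excluded. But the cycle condition $V_w=V_{w_1}$ is a statement about the subset automaton \emph{started from the full vertex set $\mathcal A$}, not from $\{i\}$; it says nothing directly about $R_x(i)$. Your claim that $R_{w^n w^{(\ell)}}$ ``stabilizes and stays inside the set reachable during the first period'' does not follow from $V_w=V_{w_1}$, and your appeal to a loop in the subset graph locates the loop in the wrong automaton. The paper sidesteps this by peeling off $\hat w$ on the \emph{left} and invoking the cycle property at each step (``$A_{\hat w x}(i,j)=A_x(i,j)=0$ since $w$ is a cycle''); if you want to make the subset idea rigorous, you must work with the period $m-1$ and relate the $\{i\}$-started run to the $\mathcal A$-started one, rather than asserting periodicity of $R_x(i)$ outright.
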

\begin{proof}
Suppose there is a cycle $w \in \Sigma^m$ such that $A_{w'}(i, j) = 0$ for all $w' \preceq w$ and for some $i, j$. We claim that, for any CPS $P$, there is $x \in P$ such that $A_x(i, j) = 0$. This derives a contradiction since $X$ is irreducible.

Obviously, if there exists $x \in P$ such that $x \preceq w$, then $A_x(i, j) = 0$.

Suppose $x \not \preceq w$ for all $x \in P$. $P$ being a CPS asserts that there exists a CPS $P_1$ such that $\hat{w} P_1 \subset P$, where $\hat{w} = w_1 w_2 \ldots w_{m-1}$. If there is $x \in P_1$ such that $x \preceq w$, then $A_{\hat{w}x}(i, j) = A_x(i, j) = 0$ since $w$ is a cycle. Otherwise, analogous to the earlier discussion, there exists a CPS $P_2$ such that $\hat{w} P_2 \subset P_1$.

Repeating the above processes, it is seen that there is a unique $k \in \mathbb{N}$ satisfying $k |w| \leq |P| < (k+1) |w|$. Hence there exists $x \in P_{k+1}$ such that $\hat{w}^k x \in P$ and $A_{\hat{w}^k x}(i, j) = 0$. The proof is complete.
\end{proof}

Recall that every TSFT $X$ over a finite alphabet $\mathcal{A}$ is associated with a graph representation $G = G_0 \bigsqcup G_1$ and a labeled graph representation $\mathcal{G} = (\overline{G}, \mathcal{L})$, herein $\overline{G}$ is obtained from merging $G_0$ with $G_1$. More specifically, the vertex set of $\overline{G}$ is, up to the change of symbols, the alphabet $\mathcal{A}$. Theorem \ref{thm:matrix-cycle-not-irr} elaborates that the verification of irreducibility of $X$ can be done in finite steps.

\begin{theorem}\label{thm:matrix-cycle-not-irr}
Suppose $X$ is a TSFT with graph representation $G = G_0 \bigsqcup G_1$ and $n \times n$ adjacency matrices $A_0$ and $A_1$.  If there exists $w \in \Sigma^{n2^{n-1}}$ and $i, j \in \mathcal{A}$ such that $A_{w'}(i, j) = 0$ for all $w' \preceq w$, then $X$ is not irreducible.
\end{theorem}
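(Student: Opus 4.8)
The plan is to derive this from Lemma~\ref{lem:cycle-non-irr}, used contrapositively: if one can exhibit, for \emph{some} pair of states $i^{*},j^{*}$, a cycle $c$ of the labeled graph $\mathcal G$ with $A_{c'}(i^{*},j^{*})=0$ for every prefix $c'\preceq c$, then $X$ is not irreducible. So the whole task is to manufacture such a cycle from the hypothesized word $w=w_1\cdots w_N$ with $N=n2^{n-1}$ and $A_{w'}(i,j)=0$ for all $w'\preceq w$.

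The first step is a pigeonhole argument along the prefixes $p_k=w_1\cdots w_k$. To each $p_k$ I would attach a finite datum recording the last symbol $w_k$ together with the reachability profile of $p_k$ in $\overline G$ (built from $V_{p_k}$), maintained in a form closed under the one-symbol extension $p_k\mapsto p_{k+1}$ and refined just enough that equality of data forces equality of the $V$-sets that occur in the definition of a cycle. These data lie in a set of size at most $n2^{n-1}$, so two among $p_1,\dots,p_N$ carry the same datum, say $p_a$ and $p_b$ with $a<b$.

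Second, I would read off from this coincidence that the loop word $c:=w_{a+1}\cdots w_b$ is a cycle of $\mathcal G$: agreement of the recorded symbols gives $w_b=w_{a+1}$, and agreement of the recorded profiles gives $V_c=V_{w_{a+1}}$, which are exactly the two requirements in the definition of a cycle. Third, I would transfer the zero-prefix property. Factoring $A_{p_{a+l}}=A_{w_{a+1}\cdots w_{a+l}}\,A_{p_a}$ and using $A_{p_{a+l}}(i,j)=0$ for $1\le l\le b-a$, each summand $A_{w_{a+1}\cdots w_{a+l}}(i,s)\,A_{p_a}(s,j)$ vanishes; choosing $j^{*}$ with $A_{p_a}(j^{*},j)>0$ — such $j^{*}$ exists because the $j$-th column of $A_{p_a}$ is nonzero, and $j^{*}\ne i$ automatically since $A_{p_a}(i,j)=0$ — yields $A_{c'}(i,j^{*})=0$ for every $c'\preceq c$, the case $c'=\epsilon$ reducing to $i\ne j^{*}$. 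Hence $c$ is a cycle with the zero-prefix property for the pair $(i,j^{*})$, and Lemma~\ref{lem:cycle-non-irr} shows $X$ is not irreducible.

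The crux is the first step: one must isolate a bookkeeping datum that is simultaneously fine enough that a repeat genuinely \emph{certifies} the cycle condition $V_c=V_{w_{a+1}}$ — not merely that two endpoint labels agree — and coarse enough to fit into a set of cardinality $\le n2^{n-1}$. This is exactly where both factors of the bound enter: a factor $2^{n-1}$ from a subset of the $n$-element state set, and a further factor $n$ from a distinguished state needed to realize the loop as an honest cycle of the labeled graph rather than only a loop of the underlying subset dynamics. Once the right datum is fixed, steps two and three are routine.
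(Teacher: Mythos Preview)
Your overall strategy coincides with the paper's: manufacture a cycle inside $w$ by pigeonhole on prefixes and then invoke Lemma~\ref{lem:cycle-non-irr}. Two concrete issues, and one point of comparison.

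\textbf{An indexing slip in step two.} If the datum attached to $p_k$ records the last symbol $w_k$, then a coincidence at positions $a<b$ yields $w_a=w_b$, not $w_b=w_{a+1}$. With $c=w_{a+1}\cdots w_b$ the cycle requirement $c_{|c|}=c_1$ reads $w_b=w_{a+1}$, which does not follow. Taking $c=w_a\cdots w_b$ instead fixes this, but then the factorization in step three must run through $A_{p_{a-1}}$ rather than $A_{p_a}$ (and the boundary case $a=1$ needs a separate word, where $A_{p_0}=I$ forces $j^{*}=j$ and one must check $i\neq j$ directly). As written, steps two and three do not mesh.

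\textbf{The datum is left abstract, and this is exactly where the paper is explicit.} The paper does not pigeonhole over all $N$ prefixes with an unspecified composite invariant. Instead it first reduces (using Theorem~\ref{thm:iff-cond-and-onlyif-cond-irr-mixing}(a)) to $A_0,A_1$ irreducible, fixes without loss of generality $w_N=0$, and argues that the symbol $0$ must recur in every window of length $n$ along $w$; this forces at least $2^{n-1}$ of the prefixes $p_k$ to end in $0$. The pigeonhole is then applied \emph{only} to those prefixes, on the value of $V_{p_k}$, which ranges over nonempty subsets of the fixed set $V_0$. This is how the two factors in $n\cdot 2^{n-1}$ arise in the paper: the window length $n$ controls the spacing of the distinguished symbol, and $2^{n-1}$ bounds the number of reachability profiles among those positions. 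Your description of the datum as ``a distinguished state together with a subset'' does not match this decomposition.

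\textbf{Your step three is an addition.} The paper applies Lemma~\ref{lem:cycle-non-irr} with the original pair $(i,j)$ throughout; the passage from $j$ to $j^{*}$ via a nonzero entry of the $j$th column of $A_{p_a}$ (valid under the standing essential-graph convention, which guarantees that column is nonzero) is your own device for transferring the zero-prefix condition to a middle segment of $w$. It is a legitimate manoeuvre, but it is not how the paper proceeds.
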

\begin{proof}
Note that $A_0, A_1$ being $n \times n$ matrices infers that $|\mathcal{A}| = n$. We claim that each word $w$ of length $n 2^{n-1}$ contains a cycle. Lemma \ref{lem:cycle-non-irr} asserts that $X$ is not irreducible.

Without loss of generality, we assume that $w_{n 2^{n-1}} = 0$. The irreducibility of $A_0$ and $A_1$ imply that there exists $1 \leq k_1 \leq n$ such that $w_{k_1} = 0$. Similarly, there exists $k_l +1 \leq k_{l+1} \leq k_l + n$ such that $w_{k_l} = 0$ for $l \geq 1$ and $k_{l+1} \leq n 2^{n-1}$. It follows that $w_1 w_2 \ldots w_{n2^{n-1}-1}$ contains at least $(2^{n-1}-1)$ $0$'s. Moreover, the possible choices of terminal states of $\mathcal{L}^{-1}(0)$ is $2^{n-1} - 1$. The Pigeonhole Principle ensures $V_{w_1 \ldots w_{k_l}} = V_w$ for some $l$, where $V_{\omega}$ is defined in \eqref{eq:possible-terminal-state} for $\omega \in \Sigma^*$. That is, $w$ contains a cycle.

This completes the proof.
\end{proof}

Suppose $X$ is a TSFT with labeled graph representation $\mathcal{G} = (\overline{G}, \mathcal{L})$ and adjacency matrices $A_0$ and $A_1$. Recall that $A_0, A_1$ are both $0$-$1$ matrices. For $k = 0, 1$, define the symbolic adjacency matrix $S_k$ as
\begin{equation}
S_k(i, j) = \left\{
  \begin{array}{ll}
  k, & A_k(i, j) = 1\hbox{;} \\
  \varnothing, & \hbox{otherwise.}
  \end{array}\right.
\end{equation}
The symbolic adjacency matrix $S$ of $\mathcal{G}$ is defined as $S = S_0 + S_1$, where the summation is the formal sum of the labels. Notably, every entry in $S^k$ is the summation of words in $\Sigma^*$. Combining the symbolic adjacency matrix $S$ together with Theorem \ref{thm:matrix-cycle-not-irr} signifies Corollary \ref{cor:symbolic-verify-irr}. The proof is straightforward, and hence is omitted.

\begin{corollary}\label{cor:symbolic-verify-irr}
Let $X$ be a TSFT with $n \times n$ symbolic adjacency matrix $S$. Let $\mathbf{S} = \Sigma_{k=1}^{n 2^{n-1}} S^k$. Then $X$ is irreducible if and only if, for each pair $i, j \in \mathcal{A}$, $\mathbf{S}(i, j)$ contains the formal sum of entries of $P$ for some CPS $P$.
\end{corollary}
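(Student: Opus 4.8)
The plan is to deduce this from Theorem \ref{thm:iff-cond-and-onlyif-cond-irr-mixing}(b) by setting up the obvious dictionary between the symbolic adjacency matrix $S$ and the pair of $0$-$1$ adjacency matrices $A_0, A_1$. The one observation that does all the work is: for a word $x = x_1 x_2 \cdots x_k \in \Sigma^k$, the word $x$ occurs as a summand of the entry $S^k(i,j)$ if and only if $A_x(i,j) > 0$. This is immediate from the definitions $S = S_0 + S_1$ and $A_x = A_{x_k}\cdots A_{x_1}$, since a labeled path from $i$ to $j$ spelling $x$ exists exactly when the corresponding product of $0$-$1$ matrices has positive $(i,j)$-entry. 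Summing over $k = 1,\dots,n2^{n-1}$, a word $x$ is a summand of $\mathbf{S}(i,j)$ if and only if $1 \le |x| \le n2^{n-1}$ and $A_x(i,j) > 0$.

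For the ``if'' direction, suppose that for each pair $i,j$ the entry $\mathbf{S}(i,j)$ contains the formal sum of the entries of some CPS $P$. By the dictionary, every $x \in P$ satisfies $A_x(i,j) > 0$, so $P$ is precisely a CPS witnessing the criterion of Theorem \ref{thm:iff-cond-and-onlyif-cond-irr-mixing}(b) for the pair $i,j$; since this holds for all pairs, $X$ is irreducible.

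For the ``only if'' direction, assume $X$ is irreducible and fix $i,j$. Theorem \ref{thm:iff-cond-and-onlyif-cond-irr-mixing}(b) supplies a CPS $P_0$ with $A_x(i,j)>0$ for all $x \in P_0$; since the sum defining $\mathbf{S}$ begins at $k=1$ we arrange all words of $P_0$ to have positive length (when $i=j$ this means replacing the trivial CPS $\{\epsilon\}$ by one built from a genuine cycle through the vertex $i$, which exists by irreducibility of $A_0,A_1$ and essentiality of the graph). The point is that $P_0$ may be long, and I will replace it by the minimal CPS of ``good'' words: let $G = \{x \in \Sigma^* : |x|\ge 1,\ A_x(i,j)>0\}$ and let $B$ be the prefix-closed set of words having no prefix in $G$. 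Taking the contrapositive of Theorem \ref{thm:matrix-cycle-not-irr}, no word of length $n2^{n-1}$ lies in $B$, so $B$ is finite with $\max\{|x|:x\in B\}\le n2^{n-1}-1$. Set $P = \{xa : x\in B,\ a\in\Sigma,\ xa\in G\}$. A routine verification shows $P$ is a prefix set, that every word of length exceeding $\max_{x\in B}|x|$ has a prefix in $P$ (follow its prefixes until the first one leaves $B$), hence $P$ is a CPS with $|P|\le n2^{n-1}$, and by construction $A_x(i,j)>0$ for all $x\in P$. By the dictionary, each $x\in P$ is a summand of $\mathbf{S}(i,j)$, so $\mathbf{S}(i,j)$ contains the formal sum of the entries of $P$; as $i,j$ were arbitrary, the proof concludes.

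The main obstacle is the ``only if'' direction, and specifically the step of trimming an arbitrary CPS of good words down to one of length at most $n2^{n-1}$: this is exactly where the quantitative bound of Theorem \ref{thm:matrix-cycle-not-irr} enters, together with the (easy, K\"onig-type) fact that the tree of ``bad'' words is finite. A minor point to handle with care is the degenerate case $i=j$, where the empty word is formally a CPS but is excluded from $\mathbf{S}$; irreducibility forces an honest positive-length cycle through each vertex, which resolves it.
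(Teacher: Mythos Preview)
Your argument is correct and is exactly the approach the paper has in mind: the paper omits the proof, saying only that it follows by combining the symbolic adjacency matrix with Theorem~\ref{thm:matrix-cycle-not-irr}, and your write-up makes this explicit via the dictionary $x \in S^k(i,j) \Longleftrightarrow A_x(i,j)>0$ together with Theorem~\ref{thm:iff-cond-and-onlyif-cond-irr-mixing}(b). One small simplification: the detour through $P_0$ and the special handling of $i=j$ are unnecessary, since your direct construction of $P$ from the set $G$ of good words and the contrapositive of Theorem~\ref{thm:matrix-cycle-not-irr} already works uniformly for all pairs $i,j$ (including $i=j$) and never invokes $P_0$.
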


\begin{definition}\label{def:irreducible-symbolic-matrix}
An $n \times n$ symbolic matrix $\mathbf{M}$ over an finite alphabet $\Lambda$ is called \emph{irreducible} if, for $1 \leq i, j \leq n$, there exists $l \in \mathbb{N}$ such that $\mathbf{M}^l(i, j)$ contains the formal sum of a CPS $P$ in $\Lambda^*$.
\end{definition}

Combining Corollary \ref{cor:symbolic-verify-irr} together with Definition \ref{def:irreducible-symbolic-matrix} we have extended the theorem which verifies the irreducibility of shifts of finite type in symbolic dynamics. Namely,
\begin{quote}
A TSFT $X$ is irreducible if and only if the corresponding symbolic adjacency matrix $S$ is irreducible. Moreover, an $n \times n$ symbolic adjacency matrix $S$ is irreducible if and only if, for each $i, j$, $S^{k_{i,j}}(i, j)$ contains the formal sum of a CPS with $k_{i,j} \leq n 2^{n-1}$.
\end{quote}

\begin{example}\label{eg:TSFT-irreducible}
Suppose $X$ is a TSFT with adjacency matrices
$$
A_0 = \begin{pmatrix}
1 & 1 \\
1 & 0
\end{pmatrix}
\quad \text{and} \quad
A_1 = \begin{pmatrix}
0 & 1 \\
1 & 1
\end{pmatrix}.
$$
Theorem \ref{thm:iff-cond-and-onlyif-cond-irr-mixing} (or Proposition \ref{prop:2symbol-is-irr}) infers that $X$ is an irreducible TSFT.

It is seen that the symbolic adjacency matrix of $X$ is
$$
S = \begin{pmatrix}
0 & 0 + 1 \\
0 + 1 & 1
\end{pmatrix}.
$$
See Figure \ref{fig:label-graph-for-irr-TSFT} for the labeled graph representation of $X$. Note that $0, 1$ in $S$ are symbols rather than integers. Moreover,
$$
S^2 = \begin{pmatrix}
2 \cdot 00 + 01 + 10 + 11 & 00 + 2 \cdot 01 + 11 \\
00 + 2 \cdot 10 + 11 & 00 + 01 + 10 + 2 \cdot 11
\end{pmatrix}
$$
and
$$
S + S^2 = \begin{pmatrix}
0 + 2 \cdot 00 + 01 + 10 + 11 & 0 + 1 + 00 + 2 \cdot 01 + 11 \\
0 + 1 + 00 + 2 \cdot 10 + 11 & 1 + 00 + 01 + 10 + 2 \cdot 11
\end{pmatrix}.
$$
This shows that, for each $i, j \in \{0, 1\}$, a CPS $P_{i, j}$ contained in $S + S^2$ is
\begin{align*}
P_{0, 0} &= \{0, 10, 11\}, & P_{0, 1} &= \{0, 1\}, \\
P_{1, 0} &= \{0, 1\}, & P_{1, 1} &= \{00, 01, 1\}.
\end{align*} 
It also concludes that $X$ is irreducible.

\begin{figure}
\begin{center}
\psset{unit=0.8cm}
\begin{pspicture}(8,2)
\psset{nodesep=0.1cm}
\rput(2,1){\ovalnode{A}{$0$}}  \rput(6,1){\ovalnode{B}{$1$}}

\ncarc[arcangle=20]{->}{A}{B}\Aput{$0 + 1$}  \ncarc[arcangle=20]{->}{B}{A}\Aput{$0 + 1$}  \nccurve[angleA=30,angleB=-30,ncurv=5.5]{->}{B}{B}\Aput{$1$} \nccurve[angleA=150,angleB=210,ncurv=5.5]{->}{A}{A}\Bput{$0$}
\end{pspicture}
\end{center}
\caption{Labeled graph representation for the irreducible tree-shift of finite type investigated in Example \ref{eg:TSFT-irreducible}.}
\label{fig:label-graph-for-irr-TSFT}
\end{figure}
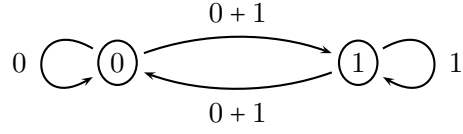
\end{example}

\begin{remark}
It is remarkable that the choice of CPS is not unique, which is seen from the above example.
\end{remark}

An extension of Corollary \ref{cor:symbolic-verify-irr} reveals a necessary and sufficient condition for verifying whether a TSFT is mixing.

\begin{theorem}\label{thm:primitive-matrix-finite-steps}
Let $X$ be a TSFT with $n \times n$ symbolic adjacency matrix $S$. Then $X$ is mixing if and only if there exists an integer $k \leq n^3 2^{2(n-1)}$ such that $S^k(i, j)$ contains the formal sum of entries of a CPS $P$ for $1 \leq i, j \leq n$.
\end{theorem}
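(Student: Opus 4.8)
The plan is to parallel the argument of Theorem~\ref{thm:matrix-cycle-not-irr} and Corollary~\ref{cor:symbolic-verify-irr}, but now tracking, for a single word $w$, \emph{all} pairs $(i,j)$ simultaneously rather than one at a time. By Theorem~\ref{thm:iff-cond-and-onlyif-cond-irr-mixing}(c), $X$ is mixing if and only if there is a \emph{single} CPS $P$ with $A_x(i,j)>0$ for every $x\in P$ and every $i,j\in\mathcal{A}$. So the goal is to show that if no such CPS exists along words of length at most $n^3 2^{2(n-1)}$, then none exists at all. First I would set up the obstruction: define, for a word $w$, the ``failure set'' $F(w)=\{(i,j): A_{w'}(i,j)=0 \text{ for all } w'\preceq w\}$, i.e.\ the pairs that are \emph{not yet} realized by any prefix of $w$. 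The mixing failure along a branch corresponds to $F(w)\neq\varnothing$ persisting, and the combinatorial content is that if $F(w)\neq\varnothing$ for a sufficiently long $w$, then one can find a ``cycle'' in the labeled graph along which both the terminal-state set $V_w$ \emph{and} the failure set $F(w)$ return to an earlier value, allowing the bad branch to be pumped forever — contradicting that any CPS reaches bounded depth.

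The key steps, in order: (1) Reformulate mixing via Theorem~\ref{thm:iff-cond-and-onlyif-cond-irr-mixing}(c) as the existence of one CPS $P$ working for all $(i,j)$, and dually: $X$ fails to be mixing iff for every CPS $P$ there is some $x\in P$ and some pair $(i,j)$ with $A_x(i,j)=0$. (2) Show the analogue of Lemma~\ref{lem:cycle-non-irr}: if there is a ``generalized cycle'' $w$, meaning $V_w=V_{w_1}$, $w_k=w_1$, \emph{and} $F(w'w)\subseteq F(w')$ for the relevant prefixes (equivalently the failure set does not shrink around the cycle), then $w$ can be pumped so that every CPS $P$ has an element $\hat w^k x$ along which some pair stays zero — exactly the pumping argument in Lemma~\ref{lem:cycle-non-irr}, but carrying the extra bookkeeping of $F$. (3) Bound the length forcing such a cycle: the relevant state is the pair $(V_w, F(w))$ together with the current terminal vertex. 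There are at most $2^{n-1}$ choices for $V_w$ (sets of vertices, minus the empty set, as in Theorem~\ref{thm:matrix-cycle-not-irr}), at most $2^{n^2}$ a priori for $F(w)$, but since $F(w)$ is a \emph{decreasing} function of the prefix it can only change $n^2$ times; combined with the $\le n$-spacing of repeated letters (from irreducibility of $A_0$ and $A_1$, as in Theorem~\ref{thm:matrix-cycle-not-irr}) and the $n$ choices of terminal vertex, a Pigeonhole count gives that every word of length $n^3 2^{2(n-1)}$ contains a generalized cycle of the required type. (4) Conclude: if $S^k(i,j)$ contains no CPS for all $k\le n^3 2^{2(n-1)}$, pick the offending word $w$ of that length, extract the generalized cycle, pump it, and derive that no CPS can realize all pairs — so $X$ is not mixing. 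The converse (mixing $\Rightarrow$ bounded $k$) is immediate from the definition together with the fact that once a single CPS of bounded depth exists we are done.

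I would be careful about one subtlety: the ``$V_w = V_{w_1}$, $w_k=w_1$'' cycle notion (Definition preceding Lemma~\ref{lem:cycle-non-irr}) was designed for the fixed-$(i,j)$ irreducibility argument, whereas here the cycle must simultaneously not help \emph{any} unresolved pair; this is why the correct invariant is the triple $(\text{terminal vertex}, V_w, F(w))$ and why the exponent jumps from $n2^{n-1}$ to roughly $n^3 2^{2(n-1)}$ — one factor $2^{n-1}$ and one factor $n$ from the original count, an extra $n$ from the bounded number of decrements of $F$, and an extra $2^{n-1}$ because we now also pin down a reachable-state set for the pumping to close up on both the $V$ and the $F$ coordinate. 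The main obstacle will be Step~(2): verifying that the pumping construction of Lemma~\ref{lem:cycle-non-irr} still produces a genuine tree $t\in X$ (using that $X$ is a Markov tree-shift / vertex tree-shift, so local consistency along each branch suffices) while the failure set along the pumped branch genuinely never recovers — one must check that composing the cycle does not accidentally create a prefix $w'$ with $A_{w'}(i,j)>0$ for a previously-bad pair, which is exactly what the condition ``$F$ does not shrink around the cycle'' encodes and what the Pigeonhole argument in Step~(3) must be set up to guarantee. Once that invariant is correctly isolated, the rest is the same bookkeeping as in Theorem~\ref{thm:matrix-cycle-not-irr} and Corollary~\ref{cor:symbolic-verify-irr}.
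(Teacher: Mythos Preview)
Your approach is genuinely different from the paper's, and it contains a real gap.

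The paper does not run a direct Pigeonhole/pumping argument. Instead it bootstraps from the irreducibility bound of Corollary~\ref{cor:symbolic-verify-irr}: using irreducibility repeatedly, one finds exponents $k_1,\dots,k_n\le n2^{n-1}$ so that every diagonal entry of $S^{k_1\cdots k_n}$ contains the formal sum of a CPS; writing $S^{k_1\cdots k_n}=D+\overline S$ with $D$ diagonal, one then uses the inequality $(D+\overline S)^{n2^{n-1}}\ge\sum_{i=1}^{n2^{n-1}}\overline S^{\,i}$ together with irreducibility of $\overline S$ to force every entry to contain a CPS. The bound $n^3 2^{2(n-1)}$ falls out by multiplying exponents. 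No branch-tracking or failure-set bookkeeping appears.

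The gap in your plan is that the invariant $F(w)=\{(i,j):A_{w'}(i,j)=0\text{ for all }w'\preceq w\}$ does \emph{not} detect mixing failure. By Theorem~\ref{thm:iff-cond-and-onlyif-cond-irr-mixing}(c) and essentiality (so that a positive $A_{w'}$ forces all extensions positive), $X$ fails to be mixing exactly when there is an infinite branch every finite prefix of which has $A_{w'}$ with \emph{some} zero entry; but the location of that zero entry may drift, so $F(w)$ can become empty while the branch stays bad. Concretely, take
\[
A_0=\begin{pmatrix}0&1\\1&1\end{pmatrix},\qquad A_1=\begin{pmatrix}1&1\\1&0\end{pmatrix}.
\]
Both are essential and irreducible. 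Along the branch $0101\cdots$ one computes (in the boolean sense) $A_{(01)^k}=\bigl(\begin{smallmatrix}1&1\\0&1\end{smallmatrix}\bigr)$ and $A_{(01)^k 0}=\bigl(\begin{smallmatrix}0&1\\1&1\end{smallmatrix}\bigr)$ for all $k\ge1$, so every prefix has a zero entry and $X$ is not mixing; yet $F(01)=\{(1,1)\}\cap\{(2,1)\}=\varnothing$. Your Step~(2) pumping relies on a fixed unresolved pair $(i,j)$ surviving the cycle, and your Step~(3) count uses that $F$ can only decrement $n^2$ times---but once $F$ has decremented to $\varnothing$ there is nothing left to pump, and the argument stalls without contradicting the existence of an infinite bad branch.

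The natural repair is to track the full boolean positivity pattern of $A_w$ rather than $F(w)$: two prefixes with the same pattern let you pump the intervening segment, and all intermediate prefixes inherit the patterns of the original branch. That argument is correct, but the state space has size on the order of $2^{n^2}$, so it does not deliver the bound $n^3 2^{2(n-1)}$ claimed in the theorem. To recover that specific bound you need a different idea---essentially the diagonal-first reduction the paper uses.
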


\begin{definition}\label{def:primitive-symbolic-matrix}
An $n \times n$ symbolic matrix $\mathbf{M}$ over an finite alphabet $\Lambda$ is called \emph{primitive} if there exists $l \in \mathbb{N}$ such that $\mathbf{M}^l(i, j)$ contains the formal sum of a CPS $P$ in $\Lambda^*$ for $1 \leq i, j \leq n$.
\end{definition}

Combining Theorem \ref{thm:primitive-matrix-finite-steps} together with Definition \ref{def:primitive-symbolic-matrix} we have extended the theorem which verifies the mixing of shifts of finite type in symbolic dynamics. Namely,
\begin{quote}
A TSFT $X$ is mixing if and only if the corresponding symbolic adjacency matrix $S$ is primitive. Moreover, an $n \times n$ symbolic adjacency matrix $S$ is primitive if and only if $S^k(i, j)$ contains the formal sum of a CPS with $k \leq n^3 2^{2(n-1)}$ for each $i, j$.
\end{quote}

\begin{proof}[Proof of Theorem \ref{thm:primitive-matrix-finite-steps}]
It suffices to show that, for a primitive matrix $S$, $S^k(i, j)$ contains the formal sum of a CPS with $k \leq n^3 2^{2(n-1)}$ for each $i, j$.

Notably, a primitive symbolic matrix is itself irreducible. Since $S$ is primitive, there exists $k_1 \in \mathbb{N}$ such that $S^{k_1}(1, 1)$ contains the formal sum of entries of a CPS. It is easily seen that $S^l$ is irreducible for all $l \in \mathbb{N}$. Hence, there exist $k_2, k_3, \ldots, k_n \in \mathbb{N}$ such that, for $l \leq l \leq n$, $S^{k_1 \cdots k_l}(i, i)$ contains the formal sum of entries of a CPS for $1 \leq i \leq l$. In other words, each diagonal entry of $S^{k_1  \cdots k_n}$ contains the formal sum of a CPS.

Write $S^{k_1  \cdots k_n} = D + \overline{S}$, where $D$ is an $n \times n$ diagonal matrix such that each entry of $D$ is either empty or the formal sum of a CPS. It can be seen that $\overline{S}$ is irreducible. Corollary \ref{cor:symbolic-verify-irr} indicates that every entry of $\Sigma_{i=1}^{n 2^{n-1}}\overline{S}^i$ contains a CPS. It follows that
$$
(S^{k_1 \cdots k_n})^{n 2^{n-1}} = (D + \overline{S})^{n 2^{n-1}} \geq \sum_{i=1}^{n 2^{n-1}} \overline{S}^i,
$$
where two symbolic matrices $\mathbf{M}, \mathbf{N}$ are denoted by $\mathbf{M} \geq \mathbf{N}$ if $\mathbf{M}(i, j)$ contains $\mathbf{N}(i, j)$ for all $i, j$. Therefore, every entry of $(S^{k_1 \cdots k_n})^{n 2^{n-1}}$ contains a CPS.

Furthermore, Theorem \ref{thm:matrix-cycle-not-irr} asserts that $k_i \leq n 2^{n-1}$ for $1 \leq i \leq n$. Hence,
$$
(S^{k_1 \cdots k_n})^{n 2^{n-1}} \leq (S^{n 2^{n-1}})^{n \cdot (n 2^{n-1})} = S^{n^3 2^{2(n-1)}}.
$$
This completes the proof.
\end{proof}

\begin{example}\label{eg:even-sum-each-block-part2}
The tree-shift of finite type $X$ discussed in Example \ref{eg:even-sum-each-block}, by Theorem \ref{thm:cong-higher-block}, is conjugate to the vertex tree-shift $X'$ over $\mathcal{A}' = \{0, 1, 2, 3\}$, where $0, 1, 2, 3$ is
$$
\Tree [.0 0 0 ] \quad \Tree [.0 1 1 ] \quad \Tree [.1 0 1 ] \quad \Tree [.1 1 0 ]
$$
respectively, with adjacency matrices
$$
A_0 = \begin{pmatrix}
1 & 1 & 0 & 0 \\
0 & 0 & 1 & 1 \\
1 & 1 & 0 & 0 \\
0 & 0 & 1 & 1
\end{pmatrix}
\quad \text{and} \quad
A_1 = \begin{pmatrix}
1 & 1 & 0 & 0 \\
0 & 0 & 1 & 1 \\
0 & 0 & 1 & 1 \\
1 & 1 & 0 & 0
\end{pmatrix}.
$$
See Figure \ref{fig:label-graph-for-even-sum-eg} for the labeled graph representation of $X'$. The symbolic adjacency matrix of $X'$ is
$$
S = \begin{pmatrix}
0 + 1 & 0 + 1 & \varnothing & \varnothing \\
\varnothing & \varnothing & 0 + 1 & 0 + 1 \\
0 & 0 & 1 & 1 \\
1 & 1 & 0 & 0
\end{pmatrix}.
$$
It follows from
$$
S^2 = \begin{pmatrix}
(0 + 1)^2 & (0 + 1)^2 & (0 + 1)^2 & (0 + 1)^2 \\
(0 + 1)^2 & (0 + 1)^2 & (0 + 1)^2 & (0 + 1)^2 \\
(0 + 1)^2 & (0 + 1)^2 & (0 + 1)^2 & (0 + 1)^2 \\
(0 + 1)^2 & (0 + 1)^2 & (0 + 1)^2 & (0 + 1)^2
\end{pmatrix}
$$
that the any two blocks can be connected through the CPS $P = \{00, 01, 10, 11\}$. This demonstrates that $X'$ is mixing, and so is $X$.

\begin{figure}
\begin{center}
\psset{unit=0.8cm}
\begin{pspicture}(9,8)
\psset{nodesep=0.1cm}
\rput(2.5,1){\ovalnode{A}{$0$}}  \rput(7,1){\ovalnode{D}{$3$}}
\rput(2.5,6){\ovalnode{B}{$1$}}  \rput(7,6){\ovalnode{C}{$2$}}

\ncarc[arcangle=20]{->}{B}{C}\Aput{$0, 1$}  \ncarc[arcangle=20]{->}{C}{B}\Aput{$0$} \ncarc[arcangle=20]{->}{B}{D}\Aput{$0, 1$}  \ncarc[arcangle=20]{->}{D}{B}\Aput{$1$} \ncarc[arcangle=20]{->}{C}{D}\Aput{$1$}  \ncarc[arcangle=20]{->}{D}{C}\Aput{$0$} \ncarc[arcangle=20]{->}{C}{A}\Aput{$0$}
\nccurve[angleA=30,angleB=-30,ncurv=5.5]{->}{D}{D}\Aput{$0$}
\nccurve[angleA=30,angleB=-30,ncurv=5.5]{->}{C}{C}\Aput{$1$}
\nccurve[angleA=150,angleB=210,ncurv=5.5]{->}{A}{A}\Bput{$0, 1$}
\ncline{->}{A}{B}\Aput{$0, 1$} \ncline{->}{D}{A}\Aput{$1$}
\end{pspicture}
\end{center}
\caption{Labeled graph representation for the mixing tree-shift of finite type elaborated in Example \ref{eg:even-sum-each-block-part2}, which is conjugate to the tree-shift given in Example \ref{eg:even-sum-each-block}.}
\label{fig:label-graph-for-even-sum-eg}
\end{figure}
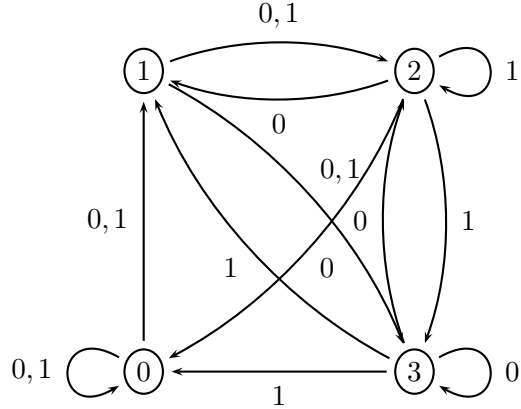
\end{example}

% -------------------------------------------------------------
\section{Conclusion and Discussion}

In this article, we define the notions of chaos (in the sense of Devaney) for tree-shifts and show that both mixing tree-shifts and irreducible tree-shifts of finite type are chaotic. Furthermore, the graph and labeled graph representations of tree-shifts of finite type are given. The necessary and sufficient condition for determining whether a tree-shift of finite type is irreducible is revealed. More precisely, with the introduction of the adjacency matrix and symbolic adjacency matrix, the irreducibility and mixing of $X$ is verified via matrix operations. Most importantly, the verification can be done in finite steps with an upper bound.

This work extends the theory of irreducibility and the mixing of shifts of finite type and matrices in symbolic dynamics. Further investigations such as the irreducibility of sofic tree-shifts are in preparation. In the mean time, the following open problems remain of interest.

A tree-shift $X$ is called a \emph{specification tree-shift} if there exists finite complete prefix sets $P_1, P_2, \ldots, P_{\ell}$ such that, for any two $u$ and $v$, there is a tree $t \in X$ satisfying the property: For each $w \in \Sigma^{|u|-1}$, there exists $1 \leq i \leq \ell$ such that
$$
t|_{S(u)} = u \quad \text{and} \quad t_{w x S(v)} = v \text{ for all } x \in P_i.
$$
It is easily seen that
\begin{center}
$X$ is mixing \quad $\Longrightarrow$ \quad $X$ is specification \quad $\Longrightarrow$ \quad $X$ is irreducible.
\end{center}
The following problems then arose.

\begin{problem}
Suppose $X$ is a specification tree-shift. Is $X$ chaotic? Moreover, is $X$ chaotic if $X$ is a specification tree-shift of finite type?
\end{problem}

\begin{problem}
Suppose $X$ is a tree-shift of finite type with the symbolic adjacency matrix $S$. How do we determine whether $X$ is specific via $S$?
\end{problem}

Let $|B_m(X)|$ denote the number of $m$-blocks in $X$. The \emph{topological entropy} of $X$ is defined as
$$
h(X) = \lim_{m \to \infty} \frac{\ln^2 |B_m(X)|}{m}
$$
whenever the limit exists, where $\ln^2 = \ln \circ \ln$.

\begin{problem}
Suppose $X$ is a tree-shift. Does $h(X) > 0$ imply the chaos of $X$?
\end{problem}

\begin{problem}
Suppose $X$ is an irreducible tree-shift of finite type. Is $h(X) > 0$? Does $X$ being a mixing tree-shift imply $h(X) > 0$?
\end{problem}

The topological entropy of tree-shifts of finite type is investigated in \cite{BC-2015a}. In the mean time, the above problems remain open.

% bibliography ---------------------------------------------------
\bibliographystyle{amsplain}%amsplain, amsalpha, abbrvnat, alpha, ieeetr, abbrv, unsrt, acm, plainnat, plain, siam
\bibliography{../../grece}

% -------------------------------------------------------------
\end{document}